\newtheorem{theorem}{Theorem}[section]
\newtheorem{lemma}[theorem]{Lemma}
\newtheorem{satz}{thm1}
\theoremstyle{definition}
\newtheorem{defn}[satz]{Definition}
\title{Analogue of DP-coloring on variable degeneracy and its applications on list vertex-arboricity and DP-coloring}
\author{Pongpat Sittitrai$^{1}$ \hskip 0.2in Kittikorn Nakprasit$^{1}$}
\address{
$^{1}$\small Department of Mathematics, Faculty of Science, Khon Kaen University, 40002, Thailand.}
\begin{document}

\maketitle

\begin{center}{\bf Abstract}\end{center}
\indent\indent 

In \cite{listnoC3adjC4}), Borodin and Ivanova proved that every planar graph 
without $4$-cycles adjacent to $3$-cycles is list vertex $2$-aborable. 
In fact, they proved a more general result in terms of variable degeneracy. 
Inspired by these results and DP-coloring which becomes a widely studied topic, 
we introduce a generalization on variable degeneracy including list vertex arboricity. 
We use this notion to extend a general result by Borodin and Ivanova. 
Not only that this theorem implies results 
about planar graphs without $4$-cycles adjacent to $3$-cycle by Borodin and Ivanova, 
it also implies many other results including 
a result by Kim and Yu  \cite{KimY} 
that every planar graph without $4$-cycles adjacent to $3$-cycles is DP-$4$-colorable. 


\section{Introduction}
Every graph in this paper is finite, simple, and undirected. 
We let $V(G)$ denote the vertex set and $E(G)$ denote edge set of a graph $G.$ 
For $U \subseteq V(G),$ we let $G[U]$ denote 
the subgraph of $G$ induced by $U.$ For $X, Y \subseteq V(G)$ 
where $X$ and $Y$ are disjoint, we let $E_G(X,Y)$ be the set 
of all edges in $G$ with one endpoint in $X$ and the other in $Y.$ 

The \emph{vertex-arboricity} $va(G)$ of a graph $G$ is the minimum number of subsets in which  
$V(G)$ can be partitioned so that each subset induces a forest. 
This concept was introduced by Chartrand, Kronk, and Wall \cite{1introgeq3} as \emph{point-arboricity}.   
They also proved that $va(G)\leq3$ for every planar graph $G$. 
Later, Chartrand and Kronk \cite{giveva3}  proved that this bound is sharp by providing  
an example of a planar graph $G$ with $va(G)= 3.$   
It was shown that determining the vertex-arboricity of a graph is NP-hard by Garey and Johnson \cite{nphard}
and determining whether $va(G)\leq2$ is NP-complete for maximal planar graphs $G$ 
by Hakimi and Schmeichel \cite{npcomplete}. 
Some researches in this topic are as follows. 
 
Raspaud and Wang \cite{PwoC3456Va2} showed that 
$va(G)\leq\lceil\frac{k+1}{2}\rceil$ for every $k$-degenerate graph $G$. 
It was proved that every planar graph $G$ has $va(G)\leq2$ 
when $G$ is without $k$-cycles for $k\in\{3,4,5,6\}$ (Raspaud and Wang \cite{PwoC3456Va2}), 
without $7$-cycles (Huang, Shiu, and Wang \cite{PwoC7}), 
without intersecting $3$-cycles (Chen, Raspaud, and Wang  \cite{PwointersectC3}),   
without chordal $6$-cycles (Huang and  Wang \cite{PwoC6chord}), or 
without intersecting $5$-cycle (Cai, Wu, and Sun \cite{PwointersectC5}). 
 
The concept of list coloring was independently introduced 
by Vizing \cite{Vizing} and by Erd\H os, Rubin, and Taylor \cite{Erdos}. 
A \emph{$k$-assignment} $L$ of a graph $G$ assigns a list $L(v)$ (a set of colors) 
with $|L(v)|=k$ to each vertex $v$ of  $G$. 
A graph $G$ is \emph{$L$-colorable} if there is a proper coloring $c$ where $c(v)\in L(v)$. 
If $G$ is $L$-colorable for each $k$-assignment $L$, then we say $G$ is \emph{$k$-choosable}. 
The \emph{list chromatic number} of $G,$ denoted by $\chi_{l}(G),$ 
is the minimum number $k$ such that $G$ is $k$-choosable. 
 
Borodin, Kostochka, and Toft \cite{definelistabori}  introduced list vertex arboricity 
which is list version of vertex arboricity.   
We say that $G$ has an \emph{$L$-forested-coloring} $f$ for a set $L = \{L(v)|v \in V(G)\}$ 
if  one can choose $f(v) \in L(v)$ for each vertex $v$ 
so that the subgraph induced by vertices with the same color is a forest. 
We say that $G$ is \emph{list vertex $k$-arborable} if $G$ has an $L$-forested-coloring for each 
$k$-assignment $L.$  
The \emph{list vertex arboricity} $a_l(G)$ is defined to be  the minimum $k$ such that 
$G$ is list vertex $k$-arborable. 
Obviously, $a_l(G) \geq va(G)$ for every graph $G$. 

It was proved that every planar graph $G$ is list vertex $2$-aborable  
when $G$ is without $k$-cycles for $k\in\{3,4,5,6\}$   (Xue and Wu \cite{PwoC3456lVa2}),
with no $3$-cycles at distance less than $2$ (Borodin and Ivanova \cite{listnointersectC3}),  
or without $4$-cycles adjacent to $3$-cycles (Borodin and Ivanova \cite{listnoC3adjC4}).

Dvo\v{r}\'{a}k and Postle \cite{DP} introduced a generalization 
of list coloring in which they called a \emph{correspondence coloring}. 
But following Bernshteyn, Kostochka, and Pron \cite{BKP},  
we call it a \emph{DP-coloring}.   


\begin{defn}\label{cover}
Let $L$ be an assignment of a graph $G.$ 
We call $H$ a \emph{cover} of $G$ 
if it satisfies all the followings:\\ 
(i) The vertex set of $H$ is $\bigcup_{u \in V(G)}(\{u\}\times L(u)) =
\{(u,c): u \in V(G), c \in L(u) \};$\\
(ii) $H[\{u\}\times L(u)]$ is a complete graph for each $u \in V(G);$\\
(iii) For each $uv \in E(G),$ 
the set $E_H(\{u\}\times L(u), \{v\}\times L(v))$ is a matching (maybe empty);\\
(iv) If $uv \notin E(G),$ then no edges of $H$ connect  
$\{u\}\times L(u)$ and  $\{v\}\times L(v).$   
\end{defn}

\begin{defn}\label{DP} 
An $(H,L)$-coloring of $G$ is an independent set in 
a cover $H$ of $G$ with size $|V(G)|.$ 
We say that a graph is \emph{DP-$k$-colorable} if $G$ has 
an $(H,L)$-coloring for each $k$-assignment $L$ and each cover $H$ of $G.$   
The \emph{DP-chromatic number} of $G,$ denoted by $\chi_{DP}(G),$ 
is the minimum number $k$ such that $G$ is DP-$k$-colorable. 
\end{defn}

If we define edges on $H$ to match exactly the same colors in $L(u)$ and $L(v)$ 
for each $uv \in E(G),$
then $G$ has an $(H,L)$-coloring if and only 
if $G$ is $L$-colorable.  
Thus DP-coloring is a generalization of list coloring.  
Moreover, $\chi_{DP}(G) \geq \chi_l(G).$ 
In fact, the difference of $\chi_{DP}(G)$ of $\chi_l(G)$  can be arbitrarily large. 
For graphs with average degree $d,$ Bernshteyn \cite{Bern} 
showed that   $\chi_{DP}(G) = \Omega(d /\log d),$   
whereas Alon \cite{Alon} showed that   $\chi_l(G) = \Omega(\log d).$ 

Dvo\v{r}\'{a}k  and Postle \cite{DP} observed that 
$\chi_{DP}(G) \leq 5$ for every planar graph $G.$  
This extends a seminal result by Thomassen \cite{Tho} on list colorings. 
On the other hand, Voigt \cite{Vo1}  gave an example of a planar graph 
which is not $4$-choosable (thus not DP-$4$-colorable). 
Kim and Ozeki \cite{KimO} showed that planar graphs without $k$-cycles 
are DP-$4$-colorable for each $k =3,4,5,6.$  
Kim and Yu  \cite{KimY} extended the result on $3$- and $4$-cycles 
by showing that planar graphs without $3$-cycles adjacent to $4$-cycles  
are DP-$4$-colorable. 

Inspired by $DP$-coloring and list-forested-coloring, 
we define a generalization of list-forested-coloring as follows.

\begin{defn}\label{DPrelax} 
Let $H$ be a cover of a graph $G$ with a list assignment $L.$ 
A \emph{representative set} $S$ of $G$ is a set of vertices 
in $H$ such that \\
(1) $|S|=|V(G)|$ and  \\
(2) $u \neq v$ for any two different members $(u, c)$ and $(v,c')$ in $S.$ 

A \emph{representative graph} $G_S$ is defined to be the graph 
obtained from $G$ and a representative set $S$ such that 
 vertices $u$ and $v$ are adjacent in $G_S$ if and only if 
$(u,i)$ and $(v,j)$ are in $S$ and both are adjacent in $H.$ 

A \emph{DP-forested-coloring} of $(G,H)$ is a representative set $S$ 
such that the representative graph $G_S$ is a  forest. 
We say that a graph is \emph{DP-vertex-$k$-aborable} if $G$ has 
a DP-forested-coloring of $(G,H)$ for each $k$-assignment $L$ and each cover $H$ of $G.$  
\end{defn}

If we define edges on $H$ to match exactly the same colors in $L(u)$ and $L(v)$ 
for each $uv \in E(G),$ then $G$ has a DP-forested-coloring for $G$ and $H$ 
if and only if  
$G$ has an $L$-forested-coloring. Note that $G$ has an $(H,L)$-coloring 
if and only if $G$ has a representative set $S$ such that $G_S$ has no edges. 

In \cite{listnoC3adjC4}), Borodin and Ivanova proved that every planar graph 
without $4$-cycles adjacent to $3$-cycle is list vertex $2$-aborable. 
In fact, they proved a more general result in which we explain later. 
Inspired by these results, we prove that every planar graph 
without $4$-cycles adjacent to $3$-cycles is DP-vertex-$2$-aborable. 
We also prove a theorem that extends a general result by Borodin and Ivanova. 
Among many consequences, this theorem implies a result by Kim and Yu  \cite{KimY} 
that every planar graph without $4$-cycles adjacent to $3$-cycle is DP-$4$-colorable. 

We note that results in \cite{listnoC3adjC4} are proved by means of 
a partition of the vertex set into desired sets. 
But representative sets and representative graphs cannot be considered as partitions. 
Thus we need different techniques to prove our results. 

\section{Main Results} 

Some definitions are required to understand the main results and the proofs. 
A graph $G$ is \emph{strictly} $k$-\emph{degenerate} for a positive integer $k$ 
if every subgraph $G'$ has a vertex $v$ with $d_{G'}(v) < k.$ 
Thus a strictly $1$-degenerate graph is an edgeless graph 
and a strictly $2$-degenerate graph is a forest. 
Note that vertices in a strictly $k$-degenerate can be removed in an order 
that each vertex at the time of removing is adjacent to less than $k$ remaining vertices. 
Now let $f$ be a function from $V(G)$ to the set of positive integers. 
A graph $G$ is \emph{strictly} $f$-\emph{degenerate} 
if every subgraph $G'$ has a vertex $v$ with $d_{G'}(v) < f(v).$ 

Now let $f_i, i \in \{1,\ldots,s\},$ be a function 
from $V(G)$ to the set of nonnegative integers. 
An $(f_1,\ldots,f_s)$-\emph{partition} of a graph $G$ 
is a partition of $V(G)$ into $V_1,\ldots, V_s$ such that 
an induced subgraph $G[V_i]$ is strictly $f_i$-degenerate 
for each $i\in \{1,\ldots,s\}.$ 
A $(k_1,\ldots,k_s)$-\emph{partition} where $k_i$ is a constant for each $i\in \{1,\ldots,s\}$ 
is an  $(f_1,\ldots,f_s)$-partition such that $f_i(v)=k_i$ for each vertex $v.$ 
We say that $G$ is $(f_1,\ldots,f_s)$-\emph{partitionable} if 
$G$ has  an $(f_1,\ldots,f_s)$-partition. 
Let $c$ be a function from $V(G)$ to the set of positive integers. 
Define $f_c$ from $f_i, i \in \{1,\ldots,s\},$ and $c$ by $f_c(v)=f_{c(v)}(v).$ 
Define $G_c$ to be a graph obtained from $G$ and $c$ 
such that $V(G_c)=V(G)$ while vertices $u$ and $v$ are adjacent in $G_c$ 
if and only if $u$ and $v$ are adjacent in $G$ and $c(u)=c(v).$ 
Thus a graph $G$ is $(f_1,\ldots,f_s)$-partitionable 
if and only if there is a function $c$ such that $G_c$ is strictly $f_c$-degenerate. 
By Four Color Theorem \cite{1111partition}, every planar graph is $(1,1,1,1)$-partitionable. 
However, Chartrand and Kronk \cite{giveva3} constructed planar graphs which are not $(2,2)$-partitionable. 
Even stronger, Wegner \cite{not211} showed that 
there exists a planar graph which is not $(2,1,1)$-partitionable. 
Thus it is of interest to find sufficient conditions for planar graphs 
to be $(1,1,1,1)$-, $(2,1,1)$-, or $(2,2)$-partitionable.  

Borodin, Kostochka, and Toft \cite{definelistabori} observed that the notion of 
 $(f_1,\ldots,f_s)$-partition can be applied to problems in list coloring 
and list vertex arboricity. 
Since $v$ cannot be strictly $0$-degenerate, the condition 
that $f_i(v)=0$ is equivalent to $v$ cannot be colored by  $i.$ 
In other words, $i$ is not in the list of $v.$    
Thus the case of $f_i \in \{0,1\}$ corresponds to list coloring, 
and one of  $f_i \in \{0,1\}$ corresponds to $L$-forested-coloring.  
On the other hand, Voight \cite{Vo1} showed that 
there exists a planar graph that is not $4$-choosable. 
Naturally, it is also interesting to find  sufficient conditions for planar graphs 
to be $4$-choosable or list vertex $2$-aborable. 
Borodin and Ivanova  \cite{listnoC3adjC4} obtained a general result 
which implies planar graphs are $4$-choosable and list vertex $2$-aborable.  

\begin{theorem}\label{main0} Every planar graph without $4$-cycles adjacent to $3$-cycles is 
$(f_1, \ldots, f_s)$-partitionable 
if $s \geq 2,$ $f_1(v)+\cdots+f_s(v)\geq 4$ for each vertex $v,$ 
and $f_i(v)\in \{0,1,2\}$ for each $v$ and $i.$ 
\end{theorem}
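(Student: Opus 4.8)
The plan is to prove the contrapositive by a minimal-counterexample argument combined with a discharging scheme, which is the standard strategy for results on planar graphs with cycle restrictions. Suppose $G$ is a planar graph without $4$-cycles adjacent to $3$-cycles that is \emph{not} $(f_1,\ldots,f_s)$-partitionable, with $|V(G)|+|E(G)|$ minimum among all such pairs $(G, (f_1,\ldots,f_s))$ satisfying $s\ge 2$, $\sum_i f_i(v)\ge 4$, and $f_i(v)\in\{0,1,2\}$. First I would record the easy reductions: we may assume $G$ is connected; $\sum_i f_i(v)=4$ exactly (decreasing an $f_i$ only makes the problem harder, so WLOG it equals $4$); no $f_i(v)=0$ unless forced (if some $f_i(v)=0$ we are really working with fewer parts at $v$); and crucially, the local analysis will be driven by the \emph{types} of low-degree vertices, classified by the multiset $\{f_1(v),\ldots,f_s(v)\}$, which given the constraints is essentially $(2,2,0,\ldots)$, $(2,1,1,0,\ldots)$, or $(1,1,1,1,0\ldots)$.

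The core of the proof is a list of \emph{reducible configurations}: small subgraphs $C$ such that if $G$ contains $C$, one can delete (or identify) a bounded set of vertices, apply minimality to the smaller graph to get a partition, and then extend it to the deleted vertices using the degeneracy budget $f_i(v)$. The extension step is where the hypothesis "each vertex has total budget $\ge 4$" does the work: a vertex $v$ of degree $d$ in $G$ can always be placed into some part $V_i$ keeping $G[V_i]$ strictly $f_i$-degenerate provided its number of already-colored neighbors in that part is $< f_i(v)$; summing over $i$, $d$ neighbors can be "absorbed" as long as $d \le \sum_i (f_i(v)-1) = 4 - |\{i: f_i(v)>0\}| \le 3$ is too weak, so one needs the sharper bookkeeping that forests tolerate up to $f_i(v)$ neighbors as long as they don't close a cycle — this is exactly why strict $f$-degeneracy (rather than ordinary coloring) is the right framework, and it lets vertices of degree up to roughly $4$ or $5$ be handled, with careful attention to whether neighbors in the same part are themselves adjacent. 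I would isolate reducible configurations of the form: a vertex of degree $\le 3$; two adjacent low-degree vertices; a light vertex adjacent to a light triangle or to a light $k$-vertex, etc., mirroring the Borodin–Ivanova configuration list for the list-arboricity version but re-verified in the strictly-$f$-degenerate setting.

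Then I would set up \textbf{discharging}. Assign to each vertex $v$ the charge $d(v)-4$ and to each face $\varphi$ the charge $2\ell(\varphi)-4$; by Euler's formula the total is $\sum_v(d(v)-4)+\sum_\varphi(2\ell(\varphi)-4) = -8 < 0$. The discharging rules redistribute charge from large faces and high-degree vertices to small vertices and triangular faces, exploiting the structural hypothesis: since no $3$-cycle is adjacent to a $4$-cycle, every triangular face is surrounded by faces of length $\ge 5$, which gives triangles a reliable source of charge and forbids the densest local configurations. After discharging, one argues every vertex and every face ends with nonnegative charge — using in each case that the absence of the relevant configuration (reducible, hence absent from the minimal counterexample) forces enough "room" — contradicting the negative total.

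The main obstacle, and where most of the labor lies, is twofold: (a) assembling a \emph{complete} list of reducible configurations — completeness is exactly what makes the discharging argument close, and missing a single case breaks everything; and (b) verifying reducibility in the strictly-$f$-degenerate / representative-graph setting rather than the partition setting. As the authors themselves flag in the introduction, representative sets are not partitions, so when a configuration is reduced one cannot simply "merge colour classes" — the extension must be done at the level of the cover $H$ and the representative graph $G_S$, checking that adding a vertex back keeps $G_S$ acyclic. I expect the cleanest route is to first prove the partition version via discharging exactly as above, and separately establish a lemma transferring strict-$f$-degeneracy partition results into the DP/representative framework (this is presumably the ``analogue of DP-coloring on variable degeneracy'' promised in the title), so that Theorem~\ref{main0} itself is purely the partition statement and its discharging proof; the DP consequences then follow from that transfer lemma applied with suitable constant $f_i\in\{0,1,2\}$.
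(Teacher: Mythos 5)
There is a genuine gap: what you have written is a proof \emph{plan}, and the two items that constitute the entire mathematical content of such a plan are both left open. First, you never exhibit the unavoidable set of reducible configurations; you acknowledge this yourself ("assembling a complete list \ldots is where most of the labor lies"), but without it there is no proof. Second, you never verify reducibility of any configuration in the strictly-$f$-degenerate setting, which is the delicate part precisely because, as you note, the crude count $\sum_i(f_i(v)-1)\le 3$ does not handle degree-$4$ vertices and one must exploit the structure of the configuration (which vertices of it are mutually adjacent) rather than a per-vertex budget. In addition, your discharging setup is arithmetically wrong as written: with vertex charge $d(v)-4$ and face charge $2\ell(\varphi)-4$ the total is $2|E|-4|V|+4|E|-4|F|=6|E|-4(|V|+|F|)=2|E|-8$, which is \emph{positive} for any graph with more than four edges. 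The standard pairing for this problem is $d(v)-4$ with $\ell(\varphi)-4$ (total $-8$), or $d(v)-6$ with $2\ell(\varphi)-6$ (total $-12$); the version you wrote does not yield a contradiction.

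For comparison, the paper does not redo the discharging at all. It imports Lemma~\ref{C35} (Borodin--Ivanova): every planar graph with no $4$-cycle adjacent to a $3$-cycle contains a $6$-cycle $x_1\cdots x_6$ with chord $x_1x_5$ in which every $x_i$ has degree $4$. The whole proof then consists of showing this single configuration is reducible for DP-$(f_1,\ldots,f_s)$-coloring: delete $\{x_1,\ldots,x_6\}$, color the rest by minimality, define residual budgets $f_i^*$, and extend by a short case analysis (a greedy ordering when the positive coordinates are unevenly distributed, and an explicit check of the four nonisomorphic covers of the chorded $6$-cycle otherwise). Theorem~\ref{main0} is then the special case of Theorem~\ref{main2} in which the matchings of the cover are identities. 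Note also that your proposed transfer direction is backwards for the DP applications: partitionability is the \emph{special case} of DP-$(f_1,\ldots,f_s)$-colorability, so a partition theorem cannot be upgraded to a DP theorem by a general lemma; one must prove the DP version directly, which is what the paper does.
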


We extend the concept of DP-coloring to $(f_1,\ldots,f_s)$-partition as follows. 
Let $H$ be a cover of $G$ with the list $\{1,\ldots,s\}$ for every vertex and 
$R$ be a representative set.  
Define $f_R(v)$ to equal $f_i(v)$ where $(v,i) \in R.$   
We say that a graph $G$ is \emph{DP-$(f_1,\ldots,f_s)$-colorable}  
if we can find a representative set $R$ for every cover $H$ of $G$ such that 
$G_R$ is strictly $f_R$-degenerate. 
We say that $R$ is a \emph{DP-$(f_1,\ldots,f_s)$-coloring} 
If we define edges on $H$ to match exactly the same colors 
for each $uv \in E(G),$ then a $(f_1,\ldots,f_s)$-partition exists  
if and only if a DP-$(f_1,\ldots,f_s)$-coloring exists.  
Thus $(f_1,\ldots,f_s)$-partition is a special case of DP-$(f_1,\ldots,f_s)$-coloring.

\begin{lemma}\label{C35} \cite{4choosable} Every planar graph without $4$-cycles adjacent to $3$-cycles 
contains a configuration, say $F$,  
which is a $6$-cycle $x_1\ldots x_6$ with a chord $x_1x_5$ such that $d(x_i) =4$ 
for each $i \in \{1, \ldots, 6\}.$ 
\end{lemma}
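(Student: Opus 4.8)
The plan is to prove this as a reducible-configuration statement by the classical route: a minimal-counterexample set-up followed by a discharging argument over a plane embedding. Assume $G$ is a plane graph with no $4$-cycle adjacent to a $3$-cycle and containing no copy of $F$; we may also assume $\delta(G)\ge 4$ (in the setting where this lemma is applied $G$ is a minimal counterexample, which forces it) and that $G$ is $2$-connected. Assign to each vertex $v$ the initial charge $2d(v)-6$ and to each face $f$ the charge $d(f)-6$; by Euler's formula these sum to $-12$. The goal is to redistribute charge so that every vertex and every face ends with nonnegative charge, a contradiction.

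First I would extract the combinatorial consequences of the hypothesis. Two $3$-faces cannot share an edge (their union is $K_4$ minus an edge, which contains a $4$-cycle sharing an edge with a triangle), and a $3$-face and a $4$-face cannot share an edge; hence the faces on the two sides of any edge of a $3$-face both have size at least $5$, and around any vertex the incident $3$-faces are pairwise nonconsecutive. Since $2d(v)-6\ge 2$ for every vertex, the whole deficit sits on the $3$-, $4$-, and $5$-faces (charges $-3,-2,-1$), so the discharging rules only need to push charge from vertices --- and, if convenient, from faces of size at least $7$ --- onto these small faces. A first attempt at rules is: every $3$-face takes $1$ from each incident vertex, every $4$-face takes $\tfrac12$ from each incident vertex, and every $5$-face takes $\tfrac15$ from each incident vertex; then one checks the final charges. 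High-degree vertices and big faces are comfortably nonnegative, and a $4$-vertex, which meets at most two $3$-faces, is fine unless it is incident to two $3$-faces and two $5$-faces, in which case it pays out $1+1+\tfrac15+\tfrac15>2$ and goes negative.

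That worst case is exactly where $F$ must be produced, and is the heart of the argument. Let $v$ be a $4$-vertex incident to a $3$-face $T=v u_1 u_2$ and to a face $P$ sharing the edge $vu_2$ with $T$ (recall $P$ is forced to have size at least $5$; if it has size at least $6$ there is nothing to fix, so assume $P$ is a $5$-face). Writing $P$ as the $5$-cycle $v\,u_2\,w_1\,w_2\,u_3$, the third vertex $u_1$ of $T$ together with $v,u_3,w_2,w_1,u_2$ forms a $6$-cycle $u_1\,v\,u_3\,w_2\,w_1\,u_2$ with the chord $vu_2$, whose two sides are precisely $T$ and $P$; so this is a copy of the configuration $F$ provided all of $u_1,u_2,u_3,w_1,w_2$ have degree $4$. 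Since by assumption $G$ has no $F$, at least one of these five vertices has degree at least $5$, and that vertex carries a surplus of at least $2$ over a $4$-vertex. The main obstacle is then to tune the discharging rules so that this surplus is actually routed to $T$ or $P$ (for instance, letting a $5$-face collect extra charge from its high-degree boundary vertices and from adjacent large faces, and letting a $3$-face be subsidized through the $5$-faces it borders), and to check that this repairs every poor $4$-vertex --- including the subcase where $v$ sits on two $3$-faces, which forces two such $6$-cycles-with-chord at once and hence even more high-degree vertices nearby. Once the rules are arranged so that every vertex and face is nonnegative, the contradiction with total charge $-12$ finishes the proof.
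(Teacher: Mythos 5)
First, a point of comparison: the paper does not prove this lemma at all --- it is quoted verbatim from Borodin and Ivanova \cite{4choosable} --- so there is no in-paper argument to measure yours against. Two things you did are worth endorsing: you noticed that the statement is false as literally written (a path, or a $5$-cycle, satisfies the hypothesis and contains no degree-$4$ vertex) and supplied the missing hypothesis $\delta(G)\ge 4$, which is indeed what holds in the minimal counterexamples where the lemma is invoked; and you correctly identified what $F$ is geometrically, namely a $3$-face sharing an edge with a $5$-face all six of whose boundary vertices have degree $4$, so that ``$G$ contains no $F$'' translates into ``every such face pair carries a vertex of degree at least $5$.'' The charge assignment, the Euler computation, and the structural facts (no $3$-face shares an edge with a $3$- or $4$-face, hence the $3$-faces around a vertex are pairwise nonconsecutive) are all correct.

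The gap is that the discharging is never actually carried out, and the part you defer is the entire content of the lemma. With your tentative rules every face ends at exactly $0$, so all slack lives in vertices of degree at least $5$ and in faces of size at least $7$; you must exhibit final rules and verify every vertex and face, and you stop precisely at the sentence beginning ``The main obstacle is then to tune the discharging rules.'' Two concrete symptoms show this is not routine. First, your case analysis of deficient $4$-vertices is already incomplete: a $4$-vertex incident to two $3$-faces, one $5$-face and one face of size at least $6$ pays $1+1+\tfrac15+0=2.2>2$ and is also negative, not only the two-$3$-face/two-$5$-face case you single out. Second, the surplus accounting does not obviously close: under your rules a $5$-vertex retains at most $4-2-\tfrac35=1.4$, yet it can be the designated high-degree ``rescuer'' for up to four deficient $4$-vertices lying on a single $5$-face, each demanding $0.4$, for a total of $1.6>1.4$; so either the rules must be changed (different amounts, or routing surplus through faces) or one must argue that distinct deficient vertices can draw on distinct rescuers. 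These are exactly the points where discharging proofs live or die, and until they are settled the argument is a plausible strategy rather than a proof. Since the result is Borodin and Ivanova's and the paper only cites it, the honest options are to quote it with the correct hypothesis $\delta(G)\ge4$, or to finish the verification in full.
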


Note that a DP-$(2,2)$-coloring is equivalent to a DP-forested-coloring. 

\begin{theorem}\label{main1} Every planar graph without $4$-cycles adjacent to $3$-cycles 
is DP-vertex-$2$-aborable. 
\end{theorem}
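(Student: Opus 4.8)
The plan is to derive Theorem~\ref{main1} as a special case of a more general DP-partition statement analogous to Theorem~\ref{main0}, proved by minimal-counterexample plus discharging. Since a DP-$(2,2)$-coloring is exactly a DP-forested-coloring, it suffices to prove: every planar graph without $4$-cycles adjacent to $3$-cycles is DP-$(f_1,f_2)$-colorable whenever $f_1(v)+f_2(v)\ge 4$ and $f_i(v)\in\{0,1,2\}$ for all $v,i$. Suppose not, and let $G$ be a counterexample minimizing $|V(G)|+|E(G)|$, together with a cover $H$ witnessing this. First I would establish structural (reducibility) lemmas: small-degree vertices and the special configurations cannot occur. The key mechanism replacing the ``remove a vertex, recolor greedily'' argument of the partition setting is this: if $v$ has small degree, delete $v$, obtain by minimality a representative set $R'$ of $G-v$ with $(G-v)_{R'}$ strictly $f_{R'}$-degenerate, and then I must choose $(v,i)\in V(H)$ extending $R'$ so that $v$ has fewer than $f_i(v)$ neighbors $u$ with $(u,j)\in R'$ adjacent to $(v,i)$ in $H$, \emph{and} adding $v$ does not create a cycle in the color-$i$ class. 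Because each edge $uv$ contributes a matching in $H$, for a fixed neighbor $u$ at most one color at $v$ is ``blocked'' by $u$; this is the DP-analogue of ``at most one forbidden color,'' and it is what lets the whole degeneracy/discharging bookkeeping from \cite{listnoC3adjC4} carry over. I would show, e.g., that $\delta(G)\ge 4$, that a $4$-vertex has no neighbor of degree $\le$ some bound, and — crucially — that the configuration $F$ of Lemma~\ref{C35} (a $6$-cycle $x_1\dots x_6$ with chord $x_1x_5$, all $x_i$ of degree $4$) is reducible.

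For the reducibility of $F$: delete $V(F)$, extend the coloring of $G-V(F)$ guaranteed by minimality, and color $x_1,\dots,x_6$ by hand. Each $x_i$ has exactly $4-\big(\text{\# neighbors inside }F\big)$ neighbors outside $F$, so after those outside neighbors are colored, each $x_i$ has a ``budget'' (the number of admissible colors at $x_i$ that are not blocked too many times by outside neighbors and do not by themselves force an outside cycle) which I would show is large enough that the induced subproblem on $H[\{x_i\}\times L(x_i)]$ is DP-$(g_1,g_2)$-colorable for the reduced functions $g_i$; here one uses that a strictly $f$-degenerate condition on a $6$-vertex graph with one chord is easy to satisfy directly, treating the two ``color classes'' as strictly-$f_R$-degenerate subgraphs and exhibiting an explicit admissible representative set. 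The main obstacle is exactly this case analysis on $F$: unlike in the partition proof, I cannot speak of ``$V_1$'' and ``$V_2$'' globally, and whether adding a vertex to a color class creates a cycle depends on the \emph{representative graph} $G_R$ restricted to that class, which mixes edges from $H$ nonuniformly — so I must track, for each candidate extension, both the degree constraint and the acyclicity constraint simultaneously across the at most $2^6$ color patterns on $F$, pruning by the matching structure of $H$ along the edges of $F$ and the chord.

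Once all reducible configurations are in hand, I would run the discharging argument of Borodin and Ivanova verbatim: assign charge $d(v)-4$ to each vertex and $2\ell-4$ to each $\ell$-face (so total charge is $-8$ by Euler's formula for planar graphs without $4$-cycles adjacent to $3$-cycles), apply their discharging rules, and conclude that a graph avoiding all the reducible configurations has nonnegative total charge — a contradiction. Nothing in the discharging phase sees the difference between list coloring and DP-coloring; the entire novelty is confined to showing the same configurations are reducible in the representative-set model, which is why I expect the $F$-reduction (and, secondarily, verifying the low-degree reductions respect acyclicity, not just the degeneracy count) to be where essentially all the work lies. Finally, specializing $f_1\equiv f_2\equiv 2$ gives DP-vertex-$2$-arborability of every planar graph without $4$-cycles adjacent to $3$-cycles, which is Theorem~\ref{main1}.
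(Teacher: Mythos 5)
Your overall skeleton matches the paper's: a minimal counterexample must contain the Borodin--Ivanova configuration $F$ (a $6$-cycle $x_1\dots x_6$ with chord $x_1x_5$, all vertices of degree $4$), and everything reduces to showing $F$ is reducible in the DP setting. (The paper simply cites this structural fact as Lemma~\ref{C35} rather than re-running the discharging; incidentally your face charge $2\ell-4$ gives total charge $2|E|-8$, not $-8$ --- the standard assignment is $\ell-4$ --- but since you propose to invoke the discharging verbatim this is cosmetic.)

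The genuine gap is that you never actually prove the reducibility of $F$, and your stated reason for believing it is easy is wrong. After deleting $V(F)$ and coloring $G-V(F)$ by minimality, the residual capacities satisfy $f_1^*(x_k)+f_2^*(x_k)\ge 4-(4-d_F(x_k))=d_F(x_k)$, i.e.\ the sum of capacities at each $x_k$ can equal its degree \emph{inside} $F$ exactly ($2$ for $k\in\{2,3,4,6\}$, $3$ for $k\in\{1,5\}$). This is the tight, Gallai-type boundary case in which a greedy/degeneracy argument does not close on its own: every vertex's budget can be exhausted before the last vertex is reached. The paper's entire proof consists of handling this tightness: it shows that either some edge of $F$ carries a ``mismatch'' of capacities (its CASE 2, where one starts at a well-chosen vertex $x_{k+1}$ whose selection does not decrement $x_k$ and then sweeps greedily around the cycle), or else all capacities are $1$ except at $x_1,x_5$ (its CASE 1), in which case the cover restricted to $F$ has exactly four non-isomorphic matching structures, each requiring an explicitly exhibited representative set (the paper's Figures 2--5). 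Your proposal gestures at ``tracking the degree constraint and the acyclicity constraint across the at most $2^6$ color patterns'' but neither identifies this case split nor produces any witnessing colorings, and the assertion that a strictly $f$-degenerate condition on a $6$-vertex graph with one chord ``is easy to satisfy directly'' ignores precisely the equality case that makes the lemma nontrivial. Until that case analysis is supplied, the reducibility of $F$ --- and hence the theorem --- is unproven.
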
 
\begin{proof} 
Suppose that $G$ with a cover $H$ is a minimal counterexample. 
Since $G$ does not have $4$-cycles adjacent to $3$-cycles, 
$G$ has a configuration $F$ as in Lemma \ref{C35}. 
Since a $4$-cycle is not adjacent to a $3$-cycle in $G,$ we obtain that  $F$ is an induced subgraph of $G.$  
By minimality, there is a DP-$(2,2)$-coloring $R'$ on $G - \{x_1,\ldots,x_6\}.$ 
It remains to show that we can extend a DP-$(2,2)$-coloring to $G.$ 

For each $x_k \in V(F)$ and $i \in \{1,2\},$ we put $f^*_i(x_k)$ equals $2$ minus 
the number of $(v,j) \in R'$ such that $(v,j)$ and $(x_k,i)$ are adjacent in $H.$ 

Note that if $F$ has a DP-$(f^*_1,f^*_2)$-coloring $R^*$, 
then one can obtain a desired DP-$(2,2)$-coloring on 
$G$ which can be seen from the removal such that  
 we remove vertices in $\{x_1,\ldots, x_6\}$ 
(in an order according to  $R^*$), 
and then we remove the vertices in $G- \{x_1,\ldots, x_6\}$ 
(in an order according $R'$).   

By $(f_1(x_j),f_2(x_j)) =(2,2)$ and the definition of $f^*_i(x_j),$  
we have  $\{f^*_1(x_1),f^*_2(x_1)\} =\{f^*_1(x_5),f^*_2(x_5)\} =\{1,2\}.$  
Also, we have $f^*_1(x_j)+f^*_2(x_j)\} \geq 2.$  
We will consider an inequality as an equality because 
the remaining situations are easier. 

CASE 1: $f_i^*(x_k) \geq 1$ for each $i\in \{1,2\}$ and $k \in \{1, \ldots, 6\}.$\\ 
From above, we have $(f	_1^*(x_1),f_2^*(x_1))= (1,2)$ or $(2,1)$ 
and $(f	_1^*(x_i),f_2^*(x_i))= (1,1)$ for each $i \in \{2,3,4,6\}.$ 
By symmetry, we assume $(f	_1^*(x_5),f_2^*(x_5))= (1,2).$ 
Since the names of colors can be interchanged, 
we assume further that $(x_k,i)$ and $(x_{k+1},i)$ are adjacent  in $H^*$ for each $k \in \{1,\ldots,4\}$ 
and $i \in \{1,2\}.$ 
However, the matchings from $\{(x_1,1),(x_1,2)\}$ to $\{(x_5,1),(x_5,2)\}$ 
and to $\{(x_6,1),(x_6,2)\}$ are arbitrary.  
Thus there are four non-isomorphic structure of $H^*.$ 
To illustrate  desired colorings for all four structures, 
we use Figure 1 to clarify the representation for a vertex $x_k.$ 
The single cycle means $(x_k,1)$ and the double cycle means $(x_k,2).$ 
The shade at $(x_k,1)$ indicates that we choose $(x_k,1)$ to be in a coloring $R^*.$  
Figures 2-5 show all four structures of $H^*$ with desired colorings.  

CASE 2: there exists $k$ in which $f_i^*(x_k)=0$ but $f_j^*(x_{k+1}) \geq 1$ 
where $(x_k,i)$ and $(x_{k+1},j)$ are adjacent. \\
Note that all subscripts in this case are taken in modulo $6.$  
We will apply greedy coloring in which we described later  to
 $x_{k+1},x_{k+2},\ldots,x_6,x_1,x_2,\ldots,x_{k}$ respectively.  
If we choose $(x_p,i)$ to be in $R^*$ in the process of a coloring,   
we update $f_1^*(x_{q})$ and $f_2^*(x_{q})$ of an uncolored vertex $x_q$  
by $f_j^*(x_q)  = \max \{0, f_j^*(x_q)-1 \}$ 
if $(x_p,i)$ and $(x_q,j)$ are adjacent in $H^*.$ 

First, we choose $(x_{k+1},j)$ to be in $R^*.$ 
By the condition of the case, $(f_1^*(x_k),f_2^*(x_k))$ remains the same after an update. 
Next apply greedy coloring to $x_{k+2},\ldots,x_6,x_1,x_2,\ldots,x_{k-1}$  
by choosing $(x_m,i)$ such that $f_i^*(x_m) >0$ to be in $R^*.$  
Since $f^*_1(x_j)+f^*_2(x_j) \geq d_F(x_j)$ before the process, 
one can see that a greedy coloring can be attained.  
Now at $x_k,$ we have that $(f_1^*(x_k),f_2^*(x_k)) \neq (0,0)$ 
by the choosing of $(x_{k+1},j)$ in the beginning.  
Thus we can choose $(x_k,1)$ or $(x_k,2)$ to be in $R^*$ to complete the coloring.  

Now it remains to show that every $(f^*_1,f^*_2)$ of $F$ in the beginning 
is similar to one in CASE 1 or CASE 2. 
From the observation in a paragraph before both CASE 1,  
we have $\{f^*_1(x_1),f^*_2(x_1)\} =\{f^*_1(x_5),f^*_2(x_5)\} =\{1,2\}.$  
Also, we have $f^*_1(x_j)+f^*_2(x_j)\} = 2.$  
Suppose  $(f^*_1,f^*_2)$ is not  as in CASE 2. 
Considering $(f^*_1(x_1),f^*_2(x_1)),$ we have $f^*_1(x_6)=f^*_2(x_6) = 1.$ 
Similarly, considering $(f^*_1(x_5),f^*_2(x_5)),$  we have $f^*_1(x_4)=f^*_2(x_4) = 1.$ 
Recursively, we obtain that  $f^*_1(x_i)=f^*_2(x_i) = 1$ for $i=3$ and $i=2.$ 
Thus we have the situation as in CASE 1.  
 \end{proof}

\begin{figure}[ht]\label{fig1}
\centering
\scalebox{1.5} 
{
	\begin{pspicture}(0,-1.4820312)(1.7928125,1.5220313)
	\usefont{T1}{ptm}{m}{n}
	\rput(0.79,-0.84703124){\footnotesize $j$}
	\pscircle[linewidth=0.022,dimen=outer](0.77,-0.83203125){0.35}
	\pscircle[linewidth=0.022,dimen=outer](0.77,-0.83203125){0.25}
	\psellipse[linewidth=0.022,linestyle=dashed,dash=0.16cm 0.16cm,dimen=outer](0.74,-0.18203124)(0.74,1.3)
	\pscircle[linewidth=0.022,dimen=outer](0.72,0.47796875){0.3}
	\usefont{T1}{ptm}{m}{n}
	\rput(0.69359374,0.49296874){\footnotesize $i$}
	\psline[linewidth=0.004cm](0.46,0.5779688)(0.74,0.7579687)
	\psline[linewidth=0.004cm](0.42,0.49796876)(0.82,0.7579687)
	\psline[linewidth=0.004cm](0.44,0.45796874)(0.84,0.71796876)
	\psline[linewidth=0.004cm](0.44,0.41796875)(0.9,0.6979687)
	\psline[linewidth=0.004cm](0.46,0.37796876)(0.94,0.67796874)
	\psline[linewidth=0.004cm](0.48,0.33796874)(0.96,0.6379688)
	\psline[linewidth=0.004cm](0.5,0.29796875)(0.96,0.59796876)
	\psline[linewidth=0.004cm](0.54,0.27796876)(1.0,0.55796874)
	\psline[linewidth=0.004cm](0.58,0.25796875)(1.02,0.5179688)
	\psline[linewidth=0.004cm](0.62,0.23796874)(1.0,0.45796874)
	\psline[linewidth=0.004cm](0.66,0.21796875)(1.0,0.41796875)
	\psline[linewidth=0.004cm](0.72,0.21796875)(0.96,0.35796875)
	\usefont{T1}{ptm}{m}{n}
	\rput(1.3275,1.3379687){\small $x_k$}
	\end{pspicture} 
}

\caption{$(x_k,1)$ with ${f_1}^*(x_k)=i, (x_k,2)$ with ${f_2}^*(x_k)=j$ 
and we choose $(x_k,1)$ in a coloring} 
\end{figure}

\begin{figure}[ht]\label{fig2}
\centering
\scalebox{1} 
{
	\begin{pspicture}(0,-2.86875)(8.835,2.86875)
	\pscircle[linewidth=0.022,dimen=outer](3.4521875,-1.1253124){0.35}
	\pscircle[linewidth=0.022,dimen=outer](3.4521875,-1.1253124){0.25}
	\psline[linewidth=0.004cm](3.1288955,-1.0234747)(3.3754795,-0.80715024)
	\psline[linewidth=0.004cm](3.1131608,-1.0898082)(3.491214,-0.78081673)
	\psline[linewidth=0.004cm](3.1221876,-1.1553125)(3.5421875,-0.7953125)
	\psline[linewidth=0.004cm](3.1221876,-1.2153125)(3.5821874,-0.8353125)
	\psline[linewidth=0.004cm](3.1421876,-1.2553124)(3.6421876,-0.8553125)
	\psline[linewidth=0.004cm](3.1621876,-1.2953125)(3.6821876,-0.8953125)
	\psline[linewidth=0.004cm](3.1821876,-1.3353125)(3.7221875,-0.9353125)
	\psline[linewidth=0.004cm](3.2221875,-1.3553125)(3.7421875,-0.9753125)
	\psline[linewidth=0.004cm](3.2621875,-1.3953125)(3.7621875,-1.0353125)
	\psline[linewidth=0.004cm](3.3021874,-1.4153125)(3.7821875,-1.0753125)
	\psline[linewidth=0.004cm](3.3621874,-1.4353125)(3.7821875,-1.1353126)
	\psline[linewidth=0.004cm](3.4421875,-1.4353125)(3.7621875,-1.2153125)
	\psline[linewidth=0.004cm](3.5021875,-1.4353125)(3.7221875,-1.2953125)
	\usefont{T1}{ptm}{m}{n}
	\rput(3.5057812,-1.1003125){\footnotesize $2$}
	\pscircle[linewidth=0.022,dimen=outer](5.2821875,1.9446875){0.3}
	\usefont{T1}{ptm}{m}{n}
	\rput(5.3057814,1.9596875){\footnotesize $1$}
	\psline[linewidth=0.004cm](5.0221877,2.0446875)(5.3021874,2.2246876)
	\psline[linewidth=0.004cm](4.9821873,1.9646875)(5.3821874,2.2246876)
	\psline[linewidth=0.004cm](5.0021877,1.9246875)(5.4021873,2.1846876)
	\psline[linewidth=0.004cm](5.0021877,1.8846875)(5.4621873,2.1646874)
	\psline[linewidth=0.004cm](5.0221877,1.8446875)(5.5021877,2.1446874)
	\psline[linewidth=0.004cm](5.0421877,1.8046875)(5.5221877,2.1046875)
	\psline[linewidth=0.004cm](5.0621877,1.7646875)(5.5221877,2.0646875)
	\psline[linewidth=0.004cm](5.1021876,1.7446876)(5.5621877,2.0246875)
	\psline[linewidth=0.004cm](5.1421876,1.7246875)(5.5821877,1.9846874)
	\psline[linewidth=0.004cm](5.1821876,1.7046875)(5.5621877,1.9246875)
	\psline[linewidth=0.004cm](5.2221875,1.6846875)(5.5621877,1.8846875)
	\psline[linewidth=0.004cm](5.2821875,1.6846875)(5.5221877,1.8246875)
	\usefont{T1}{ptm}{m}{n}
	\rput(5.3257813,-1.1203125){\footnotesize $1$}
	\pscircle[linewidth=0.022,dimen=outer](5.2921877,-1.1253124){0.35}
	\pscircle[linewidth=0.022,dimen=outer](5.2921877,-1.1253124){0.25}
	\usefont{T1}{ptm}{m}{n}
	\rput(7.565781,0.1196875){\footnotesize $1$}
	\pscircle[linewidth=0.022,dimen=outer](7.5221877,0.1246875){0.3}
	\pscircle[linewidth=0.022,dimen=outer](6.6921873,0.1146875){0.35}
	\pscircle[linewidth=0.022,dimen=outer](6.6921873,0.1146875){0.25}
	\psline[linewidth=0.004cm](6.3688955,0.21652527)(6.6154795,0.43284974)
	\psline[linewidth=0.004cm](6.353161,0.15019174)(6.731214,0.45918328)
	\psline[linewidth=0.004cm](6.3621874,0.0846875)(6.7821875,0.4446875)
	\psline[linewidth=0.004cm](6.3621874,0.0246875)(6.8221874,0.4046875)
	\psline[linewidth=0.004cm](6.3821874,-0.0153125)(6.8821874,0.3846875)
	\psline[linewidth=0.004cm](6.4021873,-0.0553125)(6.9221873,0.3446875)
	\psline[linewidth=0.004cm](6.4221873,-0.0953125)(6.9621873,0.3046875)
	\psline[linewidth=0.004cm](6.4621873,-0.1153125)(6.9821873,0.2646875)
	\psline[linewidth=0.004cm](6.5021877,-0.1553125)(7.0021877,0.2046875)
	\psline[linewidth=0.004cm](6.5421877,-0.1753125)(7.0221877,0.1646875)
	\psline[linewidth=0.004cm](6.6021876,-0.1953125)(7.0221877,0.1046875)
	\psline[linewidth=0.004cm](6.6821876,-0.1953125)(7.0021877,0.0246875)
	\psline[linewidth=0.004cm](6.7421875,-0.1953125)(6.9621873,-0.0553125)
	\usefont{T1}{ptm}{m}{n}
	\rput(6.7457814,0.1396875){\footnotesize $1$}
	\rput{89.35016}(7.086884,-6.9383473){\psellipse[linewidth=0.022,linestyle=dashed,dash=0.16cm 0.16cm,dimen=outer](7.0521874,0.1146875)(0.53,0.89)}
	\usefont{T1}{ptm}{m}{n}
	\rput(2.0057812,0.0996875){\footnotesize $1$}
	\pscircle[linewidth=0.022,dimen=outer](1.9721875,0.0946875){0.35}
	\pscircle[linewidth=0.022,dimen=outer](1.9721875,0.0946875){0.25}
	\pscircle[linewidth=0.022,dimen=outer](1.1621875,0.0846875){0.3}
	\usefont{T1}{ptm}{m}{n}
	\rput(1.1857812,0.0996875){\footnotesize $1$}
	\psline[linewidth=0.004cm](0.9021875,0.1846875)(1.1821876,0.3646875)
	\psline[linewidth=0.004cm](0.8621875,0.1046875)(1.2621875,0.3646875)
	\psline[linewidth=0.004cm](0.8821875,0.0646875)(1.2821875,0.3246875)
	\psline[linewidth=0.004cm](0.8821875,0.0246875)(1.3421875,0.3046875)
	\psline[linewidth=0.004cm](0.9021875,-0.0153125)(1.3821875,0.2846875)
	\psline[linewidth=0.004cm](0.9221875,-0.0553125)(1.4021875,0.2446875)
	\psline[linewidth=0.004cm](0.9421875,-0.0953125)(1.4021875,0.2046875)
	\psline[linewidth=0.004cm](0.9821875,-0.1153125)(1.4421875,0.1646875)
	\psline[linewidth=0.004cm](1.0221875,-0.1353125)(1.4621875,0.1246875)
	\psline[linewidth=0.004cm](1.0621876,-0.1553125)(1.4421875,0.0646875)
	\psline[linewidth=0.004cm](1.1021875,-0.1753125)(1.4421875,0.0246875)
	\psline[linewidth=0.004cm](1.1621875,-0.1753125)(1.4021875,-0.0353125)
	\rput{89.35016}(1.6685853,-1.5382434){\psellipse[linewidth=0.022,linestyle=dashed,dash=0.16cm 0.16cm,dimen=outer](1.6121875,0.0746875)(0.53,0.89)}
	\pscircle[linewidth=0.022,dimen=outer](3.4521875,1.1346875){0.35}
	\pscircle[linewidth=0.022,dimen=outer](3.4521875,1.1346875){0.25}
	\psline[linewidth=0.004cm](3.1288955,1.2365253)(3.3754795,1.4528497)
	\psline[linewidth=0.004cm](3.1131608,1.1701918)(3.491214,1.4791833)
	\psline[linewidth=0.004cm](3.1221876,1.1046875)(3.5421875,1.4646875)
	\psline[linewidth=0.004cm](3.1221876,1.0446875)(3.5821874,1.4246875)
	\psline[linewidth=0.004cm](3.1421876,1.0046875)(3.6421876,1.4046875)
	\psline[linewidth=0.004cm](3.1621876,0.9646875)(3.6821876,1.3646874)
	\psline[linewidth=0.004cm](3.1821876,0.9246875)(3.7221875,1.3246875)
	\psline[linewidth=0.004cm](3.2221875,0.9046875)(3.7421875,1.2846875)
	\psline[linewidth=0.004cm](3.2621875,0.8646875)(3.7621875,1.2246875)
	\psline[linewidth=0.004cm](3.3021874,0.8446875)(3.7821875,1.1846875)
	\psline[linewidth=0.004cm](3.3621874,0.8246875)(3.7821875,1.1246876)
	\psline[linewidth=0.004cm](3.4421875,0.8246875)(3.7621875,1.0446875)
	\psline[linewidth=0.004cm](3.5021875,0.8246875)(3.7221875,0.9646875)
	\usefont{T1}{ptm}{m}{n}
	\rput(3.4657812,1.1596875){\footnotesize $j$}
	\usefont{T1}{ptm}{m}{n}
	\rput(5.3257813,1.1396875){\footnotesize $1$}
	\pscircle[linewidth=0.022,dimen=outer](5.2921877,1.1346875){0.35}
	\pscircle[linewidth=0.022,dimen=outer](5.2921877,1.1346875){0.25}
	\usefont{T1}{ptm}{m}{n}
	\rput(3.4357812,1.9396875){\footnotesize $i$}
	\pscircle[linewidth=0.022,dimen=outer](3.4421875,1.9446875){0.3}
	\usefont{T1}{ptm}{m}{n}
	\rput(3.4857812,-1.9003125){\footnotesize $1$}
	\pscircle[linewidth=0.022,dimen=outer](3.4421875,-1.8953125){0.3}
	\pscircle[linewidth=0.022,dimen=outer](5.2821875,-1.9153125){0.3}
	\usefont{T1}{ptm}{m}{n}
	\rput(5.3057814,-1.9003125){\footnotesize $1$}
	\psline[linewidth=0.004cm](5.0221877,-1.8153125)(5.3021874,-1.6353126)
	\psline[linewidth=0.004cm](4.9821873,-1.8953125)(5.3821874,-1.6353126)
	\psline[linewidth=0.004cm](5.0021877,-1.9353125)(5.4021873,-1.6753125)
	\psline[linewidth=0.004cm](5.0021877,-1.9753125)(5.4621873,-1.6953125)
	\psline[linewidth=0.004cm](5.0221877,-2.0153124)(5.5021877,-1.7153125)
	\psline[linewidth=0.004cm](5.0421877,-2.0553124)(5.5221877,-1.7553124)
	\psline[linewidth=0.004cm](5.0621877,-2.0953126)(5.5221877,-1.7953125)
	\psline[linewidth=0.004cm](5.1021876,-2.1153126)(5.5621877,-1.8353125)
	\psline[linewidth=0.004cm](5.1421876,-2.1353126)(5.5821877,-1.8753124)
	\psline[linewidth=0.004cm](5.1821876,-2.1553125)(5.5621877,-1.9353125)
	\psline[linewidth=0.004cm](5.2221875,-2.1753125)(5.5621877,-1.9753125)
	\psline[linewidth=0.004cm](5.2821875,-2.1753125)(5.5221877,-2.0353124)
	\psellipse[linewidth=0.022,linestyle=dashed,dash=0.16cm 0.16cm,dimen=outer](3.4521875,-1.4653125)(0.53,0.89)
	\psellipse[linewidth=0.022,linestyle=dashed,dash=0.16cm 0.16cm,dimen=outer](5.2921877,-1.4653125)(0.53,0.89)
	\psellipse[linewidth=0.022,linestyle=dashed,dash=0.16cm 0.16cm,dimen=outer](3.4521875,1.5146875)(0.53,0.89)
	\psellipse[linewidth=0.022,linestyle=dashed,dash=0.16cm 0.16cm,dimen=outer](5.2921877,1.5346875)(0.53,0.89)
	\usefont{T1}{ptm}{m}{n}
	\rput(3.4596875,2.6846876){\small $x_1$}
	\usefont{T1}{ptm}{m}{n}
	\rput(5.2596874,2.6846876){\small $x_2$}
	\usefont{T1}{ptm}{m}{n}
	\rput(8.359688,0.1246875){\small $x_3$}
	\usefont{T1}{ptm}{m}{n}
	\rput(5.3196874,-2.6553125){\small $x_4$}
	\usefont{T1}{ptm}{m}{n}
	\rput(3.5596876,-2.6553125){\small $x_5$}
	\usefont{T1}{ptm}{m}{n}
	\rput(0.3996875,0.1646875){\small $x_6$}
	\psline[linewidth=0.024cm](2.1221876,0.4246875)(3.1221876,1.0446875)
	\psline[linewidth=0.024cm](3.7821875,1.0646875)(4.9621873,1.0646875)
	\psline[linewidth=0.024cm](5.6221876,1.0446875)(6.5221877,0.4046875)
	\psline[linewidth=0.024cm](6.5221877,-0.1753125)(5.6021876,-0.9753125)
	\psline[linewidth=0.024cm](4.9621873,-0.9553125)(3.7821875,-0.9553125)
	\psline[linewidth=0.024cm](3.1221876,-0.9553125)(2.1221876,-0.2153125)
	\psline[linewidth=0.024cm](1.1621875,0.3846875)(3.1621876,1.8646874)
	\psline[linewidth=0.024cm](3.7421875,1.8646874)(4.9821873,1.8646874)
	\psline[linewidth=0.024cm](5.5421877,1.8646874)(7.5421877,0.4246875)
	\psline[linewidth=0.024cm](7.5821877,-0.1553125)(5.5821877,-1.9353125)
	\psline[linewidth=0.024cm](4.9821873,-1.9553125)(3.7221875,-1.9753125)
	\psline[linewidth=0.024cm](3.1421876,-1.9353125)(1.1421875,-0.2153125)
	\psbezier[linewidth=0.08](3.7771952,-1.2053437)(4.157441,-1.2153381)(4.1621876,1.1847245)(3.7819414,1.1947188)
	\psbezier[linewidth=0.024](3.677195,-2.058559)(4.847579,-2.0753126)(4.8621874,1.9479338)(3.6918037,1.9646875)
	\end{pspicture} 
}
\caption{A desired coloring of $F$ with respect to this cover}
\end{figure}

\begin{figure}[ht]\label{fig3}
	\centering
	\scalebox{1} 
	{
		\begin{pspicture}(0,-2.86875)(8.835,2.86875)
		\pscircle[linewidth=0.022,dimen=outer](3.4721875,-1.0453125){0.35}
		\pscircle[linewidth=0.022,dimen=outer](3.4721875,-1.0453125){0.25}
		\psline[linewidth=0.004cm](3.1488955,-0.9434747)(3.3954797,-0.72715026)
		\psline[linewidth=0.004cm](3.1331608,-1.0098083)(3.511214,-0.70081675)
		\psline[linewidth=0.004cm](3.1421876,-1.0753125)(3.5621874,-0.7153125)
		\psline[linewidth=0.004cm](3.1421876,-1.1353126)(3.6021874,-0.7553125)
		\psline[linewidth=0.004cm](3.1621876,-1.1753125)(3.6621876,-0.7753125)
		\psline[linewidth=0.004cm](3.1821876,-1.2153125)(3.7021875,-0.8153125)
		\psline[linewidth=0.004cm](3.2021875,-1.2553124)(3.7421875,-0.8553125)
		\psline[linewidth=0.004cm](3.2421875,-1.2753125)(3.7621875,-0.8953125)
		\psline[linewidth=0.004cm](3.2821875,-1.3153125)(3.7821875,-0.9553125)
		\psline[linewidth=0.004cm](3.3221874,-1.3353125)(3.8021874,-0.9953125)
		\psline[linewidth=0.004cm](3.3821876,-1.3553125)(3.8021874,-1.0553125)
		\psline[linewidth=0.004cm](3.4621875,-1.3553125)(3.7821875,-1.1353126)
		\psline[linewidth=0.004cm](3.5221875,-1.3553125)(3.7421875,-1.2153125)
		\usefont{T1}{ptm}{m}{n}
		\rput(3.5057812,-1.1003125){\footnotesize $2$}
		\usefont{T1}{ptm}{m}{n}
		\rput(3.5,1.1196876){\footnotesize $j$}
		\pscircle[linewidth=0.022,dimen=outer](3.4521875,1.0946875){0.35}
		\pscircle[linewidth=0.022,dimen=outer](3.4521875,1.0946875){0.25}
		\psline[linewidth=0.004cm](3.1288955,1.1965252)(3.3754795,1.4128498)
		\psline[linewidth=0.004cm](3.1131608,1.1301917)(3.491214,1.4391832)
		\psline[linewidth=0.004cm](3.1221876,1.0646875)(3.5421875,1.4246875)
		\psline[linewidth=0.004cm](3.1221876,1.0046875)(3.5821874,1.3846875)
		\psline[linewidth=0.004cm](3.1421876,0.9646875)(3.6421876,1.3646874)
		\psline[linewidth=0.004cm](3.1621876,0.9246875)(3.6821876,1.3246875)
		\psline[linewidth=0.004cm](3.1821876,0.8846875)(3.7221875,1.2846875)
		\psline[linewidth=0.004cm](3.2221875,0.8646875)(3.7421875,1.2446876)
		\psline[linewidth=0.004cm](3.2621875,0.8246875)(3.7621875,1.1846875)
		\psline[linewidth=0.004cm](3.3021874,0.8046875)(3.7821875,1.1446875)
		\psline[linewidth=0.004cm](3.3621874,0.7846875)(3.7821875,1.0846875)
		\psline[linewidth=0.004cm](3.4421875,0.7846875)(3.7621875,1.0046875)
		\psline[linewidth=0.004cm](3.5021875,0.7846875)(3.7221875,0.9246875)
		\usefont{T1}{ptm}{m}{n}
		\rput(5.2657814,-1.9603125){\footnotesize $1$}
		\rput{89.35016}(7.086884,-6.9383473){\psellipse[linewidth=0.022,linestyle=dashed,dash=0.16cm 0.16cm,dimen=outer](7.0521874,0.1146875)(0.53,0.89)}
		\usefont{T1}{ptm}{m}{n}
		\rput(2.0057812,0.0996875){\footnotesize $1$}
		\pscircle[linewidth=0.022,dimen=outer](1.9721875,0.0946875){0.35}
		\pscircle[linewidth=0.022,dimen=outer](1.9721875,0.0946875){0.25}
		\pscircle[linewidth=0.022,dimen=outer](1.1621875,0.0846875){0.3}
		\usefont{T1}{ptm}{m}{n}
		\rput(1.1857812,0.0996875){\footnotesize $1$}
		\psline[linewidth=0.004cm](0.9021875,0.1846875)(1.1821876,0.3646875)
		\psline[linewidth=0.004cm](0.8621875,0.1046875)(1.2621875,0.3646875)
		\psline[linewidth=0.004cm](0.8821875,0.0646875)(1.2821875,0.3246875)
		\psline[linewidth=0.004cm](0.8821875,0.0246875)(1.3421875,0.3046875)
		\psline[linewidth=0.004cm](0.9021875,-0.0153125)(1.3821875,0.2846875)
		\psline[linewidth=0.004cm](0.9221875,-0.0553125)(1.4021875,0.2446875)
		\psline[linewidth=0.004cm](0.9421875,-0.0953125)(1.4021875,0.2046875)
		\psline[linewidth=0.004cm](0.9821875,-0.1153125)(1.4421875,0.1646875)
		\psline[linewidth=0.004cm](1.0221875,-0.1353125)(1.4621875,0.1246875)
		\psline[linewidth=0.004cm](1.0621876,-0.1553125)(1.4421875,0.0646875)
		\psline[linewidth=0.004cm](1.1021875,-0.1753125)(1.4421875,0.0246875)
		\psline[linewidth=0.004cm](1.1621875,-0.1753125)(1.4021875,-0.0353125)
		\rput{89.35016}(1.6685853,-1.5382434){\psellipse[linewidth=0.022,linestyle=dashed,dash=0.16cm 0.16cm,dimen=outer](1.6121875,0.0746875)(0.53,0.89)}
		\usefont{T1}{ptm}{m}{n}
		\rput(5.2857814,1.1196876){\footnotesize $1$}
		\pscircle[linewidth=0.022,dimen=outer](5.2921877,1.1346875){0.35}
		\pscircle[linewidth=0.022,dimen=outer](5.2921877,1.1346875){0.25}
		\usefont{T1}{ptm}{m}{n}
		\rput(3.4857812,-1.9003125){\footnotesize $1$}
		\pscircle[linewidth=0.022,dimen=outer](3.4421875,-1.8953125){0.3}
		\pscircle[linewidth=0.022,dimen=outer](5.2821875,-1.9553125){0.3}
		\psline[linewidth=0.004cm](5.0221877,-1.8553125)(5.3021874,-1.6753125)
		\psline[linewidth=0.004cm](4.9821873,-1.9353125)(5.3821874,-1.6753125)
		\psline[linewidth=0.004cm](5.0021877,-1.9753125)(5.4021873,-1.7153125)
		\psline[linewidth=0.004cm](5.0021877,-2.0153124)(5.4621873,-1.7353125)
		\psline[linewidth=0.004cm](5.0221877,-2.0553124)(5.5021877,-1.7553124)
		\psline[linewidth=0.004cm](5.0421877,-2.0953126)(5.5221877,-1.7953125)
		\psline[linewidth=0.004cm](5.0621877,-2.1353126)(5.5221877,-1.8353125)
		\psline[linewidth=0.004cm](5.1021876,-2.1553125)(5.5621877,-1.8753124)
		\psline[linewidth=0.004cm](5.1421876,-2.1753125)(5.5821877,-1.9153125)
		\psline[linewidth=0.004cm](5.1821876,-2.1953125)(5.5621877,-1.9753125)
		\psline[linewidth=0.004cm](5.2221875,-2.2153125)(5.5621877,-2.0153124)
		\psline[linewidth=0.004cm](5.2821875,-2.2153125)(5.5221877,-2.0753126)
		\psellipse[linewidth=0.022,linestyle=dashed,dash=0.16cm 0.16cm,dimen=outer](3.4521875,-1.4653125)(0.53,0.89)
		\psellipse[linewidth=0.022,linestyle=dashed,dash=0.16cm 0.16cm,dimen=outer](5.2921877,-1.4653125)(0.53,0.89)
		\psellipse[linewidth=0.022,linestyle=dashed,dash=0.16cm 0.16cm,dimen=outer](3.4521875,1.5146875)(0.53,0.89)
		\psellipse[linewidth=0.022,linestyle=dashed,dash=0.16cm 0.16cm,dimen=outer](5.2921877,1.5346875)(0.53,0.89)
		\usefont{T1}{ptm}{m}{n}
		\rput(3.4596875,2.6846876){\small $x_1$}
		\usefont{T1}{ptm}{m}{n}
		\rput(5.2596874,2.6846876){\small $x_2$}
		\usefont{T1}{ptm}{m}{n}
		\rput(8.359688,0.1246875){\small $x_3$}
		\usefont{T1}{ptm}{m}{n}
		\rput(5.3196874,-2.6553125){\small $x_4$}
		\usefont{T1}{ptm}{m}{n}
		\rput(3.5596876,-2.6553125){\small $x_5$}
		\usefont{T1}{ptm}{m}{n}
		\rput(0.3996875,0.1646875){\small $x_6$}
		\psline[linewidth=0.024cm](2.1051137,0.40652534)(3.1421876,1.0446875)
		\psline[linewidth=0.024cm](3.7821875,1.0646875)(4.9621873,1.0646875)
		\psline[linewidth=0.024cm](5.6221876,1.0446875)(6.5221877,0.4046875)
		\psline[linewidth=0.024cm](6.5221877,-0.1753125)(5.6021876,-0.9753125)
		\psline[linewidth=0.022cm](4.9621873,-1.0353125)(3.7821875,-1.0353125)
		\psline[linewidth=0.024cm](3.1221876,-0.9553125)(2.1221876,-0.2153125)
		\psline[linewidth=0.024cm](1.1421875,0.3646875)(3.1421876,1.8646874)
		\psline[linewidth=0.024cm](3.7421875,1.8646874)(4.9821873,1.8646874)
		\psline[linewidth=0.024cm](5.5421877,1.8646874)(7.5421877,0.4246875)
		\psline[linewidth=0.024cm](7.5821877,-0.1553125)(5.5821877,-1.9353125)
		\psline[linewidth=0.024cm](4.9821873,-1.9553125)(3.7221875,-1.9753125)
		\psline[linewidth=0.024cm](3.1421876,-1.9353125)(1.1421875,-0.2153125)
		\psbezier[linewidth=0.022](3.6221876,-2.0953126)(4.7821875,-2.1553125)(4.4221873,1.170756)(3.7933187,1.1846875)
		\psbezier[linewidth=0.024](3.6821876,-1.2953125)(4.887579,-1.2953125)(4.8621874,1.9511684)(3.6918037,1.9646875)
		\usefont{T1}{ptm}{m}{n}
		\rput(7.525781,0.0996875){\footnotesize $1$}
		\pscircle[linewidth=0.022,dimen=outer](7.5021877,0.1246875){0.3}
		\pscircle[linewidth=0.022,dimen=outer](6.6921873,0.0946875){0.35}
		\pscircle[linewidth=0.022,dimen=outer](6.6921873,0.0946875){0.25}
		\psline[linewidth=0.004cm](6.3688955,0.19652528)(6.6154795,0.41284972)
		\psline[linewidth=0.004cm](6.353161,0.13019174)(6.731214,0.43918326)
		\psline[linewidth=0.004cm](6.3621874,0.0646875)(6.7821875,0.4246875)
		\psline[linewidth=0.004cm](6.3621874,0.0046875)(6.8221874,0.3846875)
		\psline[linewidth=0.004cm](6.3821874,-0.0353125)(6.8821874,0.3646875)
		\psline[linewidth=0.004cm](6.4021873,-0.0753125)(6.9221873,0.3246875)
		\psline[linewidth=0.004cm](6.4221873,-0.1153125)(6.9621873,0.2846875)
		\psline[linewidth=0.004cm](6.4621873,-0.1353125)(6.9821873,0.2446875)
		\psline[linewidth=0.004cm](6.5021877,-0.1753125)(7.0021877,0.1846875)
		\psline[linewidth=0.004cm](6.5421877,-0.1953125)(7.0221877,0.1446875)
		\psline[linewidth=0.004cm](6.6021876,-0.2153125)(7.0221877,0.0846875)
		\psline[linewidth=0.004cm](6.6821876,-0.2153125)(7.0021877,0.0046875)
		\psline[linewidth=0.004cm](6.7421875,-0.2153125)(6.9621873,-0.0753125)
		\usefont{T1}{ptm}{m}{n}
		\rput(6.6857815,0.0796875){\footnotesize $1$}
		\pscircle[linewidth=0.022,dimen=outer](5.2621875,1.8846875){0.3}
		\usefont{T1}{ptm}{m}{n}
		\rput(5.2857814,1.8996875){\footnotesize $1$}
		\psline[linewidth=0.004cm](5.0021877,1.9846874)(5.2821875,2.1646874)
		\psline[linewidth=0.004cm](4.9621873,1.9046875)(5.3621874,2.1646874)
		\psline[linewidth=0.004cm](4.9821873,1.8646874)(5.3821874,2.1246874)
		\psline[linewidth=0.004cm](4.9821873,1.8246875)(5.4421873,2.1046875)
		\psline[linewidth=0.004cm](5.0021877,1.7846875)(5.4821873,2.0846875)
		\psline[linewidth=0.004cm](5.0221877,1.7446876)(5.5021877,2.0446875)
		\psline[linewidth=0.004cm](5.0421877,1.7046875)(5.5021877,2.0046875)
		\psline[linewidth=0.004cm](5.0821877,1.6846875)(5.5421877,1.9646875)
		\psline[linewidth=0.004cm](5.1221876,1.6646875)(5.5621877,1.9246875)
		\psline[linewidth=0.004cm](5.1621876,1.6446875)(5.5421877,1.8646874)
		\psline[linewidth=0.004cm](5.2021875,1.6246876)(5.5421877,1.8246875)
		\psline[linewidth=0.004cm](5.2621875,1.6246876)(5.5021877,1.7646875)
		\usefont{T1}{ptm}{m}{n}
		\rput(5.3057814,-1.0603125){\footnotesize $1$}
		\pscircle[linewidth=0.022,dimen=outer](5.2721877,-1.0653125){0.35}
		\pscircle[linewidth=0.022,dimen=outer](5.2721877,-1.0653125){0.25}
		\usefont{T1}{ptm}{m}{n}
		\rput(3.4357812,1.8796875){\footnotesize $i$}
		\pscircle[linewidth=0.022,dimen=outer](3.4421875,1.8846875){0.3}
		\end{pspicture} 
	}

	\caption{A desired coloring of $F$ with respect to this cover}
\end{figure}

\begin{figure}[ht]\label{fig4}
	\centering
	\scalebox{1} 
	{
		\begin{pspicture}(0,-2.86875)(8.835,2.86875)
		\pscircle[linewidth=0.022,dimen=outer](3.4721875,-1.0453125){0.35}
		\pscircle[linewidth=0.022,dimen=outer](3.4721875,-1.0453125){0.25}
		\psline[linewidth=0.004cm](3.1488955,-0.9434747)(3.3954797,-0.72715026)
		\psline[linewidth=0.004cm](3.1331608,-1.0098083)(3.511214,-0.70081675)
		\psline[linewidth=0.004cm](3.1421876,-1.0753125)(3.5621874,-0.7153125)
		\psline[linewidth=0.004cm](3.1421876,-1.1353126)(3.6021874,-0.7553125)
		\psline[linewidth=0.004cm](3.1621876,-1.1753125)(3.6621876,-0.7753125)
		\psline[linewidth=0.004cm](3.1821876,-1.2153125)(3.7021875,-0.8153125)
		\psline[linewidth=0.004cm](3.2021875,-1.2553124)(3.7421875,-0.8553125)
		\psline[linewidth=0.004cm](3.2421875,-1.2753125)(3.7621875,-0.8953125)
		\psline[linewidth=0.004cm](3.2821875,-1.3153125)(3.7821875,-0.9553125)
		\psline[linewidth=0.004cm](3.3221874,-1.3353125)(3.8021874,-0.9953125)
		\psline[linewidth=0.004cm](3.3821876,-1.3553125)(3.8021874,-1.0553125)
		\psline[linewidth=0.004cm](3.4621875,-1.3553125)(3.7821875,-1.1353126)
		\psline[linewidth=0.004cm](3.5221875,-1.3553125)(3.7421875,-1.2153125)
		\usefont{T1}{ptm}{m}{n}
		\rput(3.5057812,-1.1003125){\footnotesize $2$}
		\pscircle[linewidth=0.022,dimen=outer](7.5221877,0.1246875){0.3}
		\usefont{T1}{ptm}{m}{n}
		\rput(7.545781,0.1396875){\footnotesize $1$}
		\psline[linewidth=0.004cm](7.2621875,0.2246875)(7.5421877,0.4046875)
		\psline[linewidth=0.004cm](7.2221875,0.1446875)(7.6221876,0.4046875)
		\psline[linewidth=0.004cm](7.2421875,0.1046875)(7.6421876,0.3646875)
		\psline[linewidth=0.004cm](7.2421875,0.0646875)(7.7021875,0.3446875)
		\psline[linewidth=0.004cm](7.2621875,0.0246875)(7.7421875,0.3246875)
		\psline[linewidth=0.004cm](7.2821875,-0.0153125)(7.7621875,0.2846875)
		\psline[linewidth=0.004cm](7.3021874,-0.0553125)(7.7621875,0.2446875)
		\psline[linewidth=0.004cm](7.3421874,-0.0753125)(7.8021874,0.2046875)
		\psline[linewidth=0.004cm](7.3821874,-0.0953125)(7.8221874,0.1646875)
		\psline[linewidth=0.004cm](7.4221873,-0.1153125)(7.8021874,0.1046875)
		\psline[linewidth=0.004cm](7.4621873,-0.1353125)(7.8021874,0.0646875)
		\psline[linewidth=0.004cm](7.5221877,-0.1353125)(7.7621875,0.0046875)
		\usefont{T1}{ptm}{m}{n}
		\rput(5.3057814,-1.1203125){\footnotesize $1$}
		\pscircle[linewidth=0.022,dimen=outer](5.2921877,-1.0853125){0.35}
		\pscircle[linewidth=0.022,dimen=outer](5.2921877,-1.0853125){0.25}
		\psline[linewidth=0.004cm](4.9688954,-0.98347473)(5.2154794,-0.7671503)
		\psline[linewidth=0.004cm](4.953161,-1.0498083)(5.331214,-0.7408167)
		\psline[linewidth=0.004cm](4.9621873,-1.1153125)(5.3821874,-0.7553125)
		\psline[linewidth=0.004cm](4.9621873,-1.1753125)(5.4221873,-0.7953125)
		\psline[linewidth=0.004cm](4.9821873,-1.2153125)(5.4821873,-0.8153125)
		\psline[linewidth=0.004cm](5.0021877,-1.2553124)(5.5221877,-0.8553125)
		\psline[linewidth=0.004cm](5.0221877,-1.2953125)(5.5621877,-0.8953125)
		\psline[linewidth=0.004cm](5.0621877,-1.3153125)(5.5821877,-0.9353125)
		\psline[linewidth=0.004cm](5.1021876,-1.3553125)(5.6021876,-0.9953125)
		\psline[linewidth=0.004cm](5.1421876,-1.3753124)(5.6221876,-1.0353125)
		\psline[linewidth=0.004cm](5.2021875,-1.3953125)(5.6221876,-1.0953125)
		\psline[linewidth=0.004cm](5.2821875,-1.3953125)(5.6021876,-1.1753125)
		\psline[linewidth=0.004cm](5.3421874,-1.3953125)(5.5621877,-1.2553124)
		\pscircle[linewidth=0.022,dimen=outer](6.7121873,0.1146875){0.35}
		\pscircle[linewidth=0.022,dimen=outer](6.7121873,0.1146875){0.25}
		\usefont{T1}{ptm}{m}{n}
		\rput(5.2857814,-1.8803124){\footnotesize $1$}
		\pscircle[linewidth=0.022,dimen=outer](5.3021874,-1.8953125){0.3}
		\rput{89.35016}(7.086884,-6.9383473){\psellipse[linewidth=0.022,linestyle=dashed,dash=0.16cm 0.16cm,dimen=outer](7.0521874,0.1146875)(0.53,0.89)}
		\usefont{T1}{ptm}{m}{n}
		\rput(2.0057812,0.0996875){\footnotesize $1$}
		\pscircle[linewidth=0.022,dimen=outer](1.9721875,0.0946875){0.35}
		\pscircle[linewidth=0.022,dimen=outer](1.9721875,0.0946875){0.25}
		\pscircle[linewidth=0.022,dimen=outer](1.1621875,0.0846875){0.3}
		\usefont{T1}{ptm}{m}{n}
		\rput(1.1857812,0.0996875){\footnotesize $1$}
		\psline[linewidth=0.004cm](0.9021875,0.1846875)(1.1821876,0.3646875)
		\psline[linewidth=0.004cm](0.8621875,0.1046875)(1.2621875,0.3646875)
		\psline[linewidth=0.004cm](0.8821875,0.0646875)(1.2821875,0.3246875)
		\psline[linewidth=0.004cm](0.8821875,0.0246875)(1.3421875,0.3046875)
		\psline[linewidth=0.004cm](0.9021875,-0.0153125)(1.3821875,0.2846875)
		\psline[linewidth=0.004cm](0.9221875,-0.0553125)(1.4021875,0.2446875)
		\psline[linewidth=0.004cm](0.9421875,-0.0953125)(1.4021875,0.2046875)
		\psline[linewidth=0.004cm](0.9821875,-0.1153125)(1.4421875,0.1646875)
		\psline[linewidth=0.004cm](1.0221875,-0.1353125)(1.4621875,0.1246875)
		\psline[linewidth=0.004cm](1.0621876,-0.1553125)(1.4421875,0.0646875)
		\psline[linewidth=0.004cm](1.1021875,-0.1753125)(1.4421875,0.0246875)
		\psline[linewidth=0.004cm](1.1621875,-0.1753125)(1.4021875,-0.0353125)
		\rput{89.35016}(1.6685853,-1.5382434){\psellipse[linewidth=0.022,linestyle=dashed,dash=0.16cm 0.16cm,dimen=outer](1.6121875,0.0746875)(0.53,0.89)}
		\usefont{T1}{ptm}{m}{n}
		\rput(3.5,1.0796875){\footnotesize $j$}
		\pscircle[linewidth=0.022,dimen=outer](3.4721875,1.0946875){0.35}
		\pscircle[linewidth=0.022,dimen=outer](3.4721875,1.0946875){0.25}
		\usefont{T1}{ptm}{m}{n}
		\rput(3.4857812,-1.9003125){\footnotesize $1$}
		\pscircle[linewidth=0.022,dimen=outer](3.4421875,-1.8953125){0.3}
		\pscircle[linewidth=0.022,dimen=outer](3.4421875,1.9446875){0.3}
		\psline[linewidth=0.004cm](3.1821876,2.0446875)(3.4621875,2.2246876)
		\psline[linewidth=0.004cm](3.1421876,1.9646875)(3.5421875,2.2246876)
		\psline[linewidth=0.004cm](3.1621876,1.9246875)(3.5621874,2.1846876)
		\psline[linewidth=0.004cm](3.1621876,1.8846875)(3.6221876,2.1646874)
		\psline[linewidth=0.004cm](3.1821876,1.8446875)(3.6621876,2.1446874)
		\psline[linewidth=0.004cm](3.2021875,1.8046875)(3.6821876,2.1046875)
		\psline[linewidth=0.004cm](3.2221875,1.7646875)(3.6821876,2.0646875)
		\psline[linewidth=0.004cm](3.2621875,1.7446876)(3.7221875,2.0246875)
		\psline[linewidth=0.004cm](3.3021874,1.7246875)(3.7421875,1.9846874)
		\psline[linewidth=0.004cm](3.3421874,1.7046875)(3.7221875,1.9246875)
		\psline[linewidth=0.004cm](3.3821876,1.6846875)(3.7221875,1.8846875)
		\psline[linewidth=0.004cm](3.4421875,1.6846875)(3.6821876,1.8246875)
		\usefont{T1}{ptm}{m}{n}
		\rput(3.4357812,1.9396875){\footnotesize $i$}
		\psellipse[linewidth=0.022,linestyle=dashed,dash=0.16cm 0.16cm,dimen=outer](3.4521875,-1.4653125)(0.53,0.89)
		\psellipse[linewidth=0.022,linestyle=dashed,dash=0.16cm 0.16cm,dimen=outer](5.2921877,-1.4653125)(0.53,0.89)
		\psellipse[linewidth=0.022,linestyle=dashed,dash=0.16cm 0.16cm,dimen=outer](3.4521875,1.5146875)(0.53,0.89)
		\psellipse[linewidth=0.022,linestyle=dashed,dash=0.16cm 0.16cm,dimen=outer](5.2921877,1.5346875)(0.53,0.89)
		\usefont{T1}{ptm}{m}{n}
		\rput(3.4596875,2.6846876){\small $x_1$}
		\usefont{T1}{ptm}{m}{n}
		\rput(5.2596874,2.6846876){\small $x_2$}
		\usefont{T1}{ptm}{m}{n}
		\rput(8.359688,0.1246875){\small $x_3$}
		\usefont{T1}{ptm}{m}{n}
		\rput(5.3196874,-2.6553125){\small $x_4$}
		\usefont{T1}{ptm}{m}{n}
		\rput(3.5596876,-2.6553125){\small $x_5$}
		\usefont{T1}{ptm}{m}{n}
		\rput(0.3996875,0.1646875){\small $x_6$}
		\psline[linewidth=0.024cm](2.1051137,0.40652534)(3.1421876,1.8846875)
		\psline[linewidth=0.024cm](3.7821875,1.0646875)(4.9621873,1.0646875)
		\psline[linewidth=0.024cm](5.6221876,1.0446875)(6.5221877,0.4046875)
		\psline[linewidth=0.024cm](6.5221877,-0.1753125)(5.6021876,-0.9753125)
		\psline[linewidth=0.08cm](4.9621873,-1.0353125)(3.7821875,-1.0353125)
		\psline[linewidth=0.024cm](3.1221876,-0.9553125)(2.1221876,-0.2153125)
		\psline[linewidth=0.024cm](1.1621875,0.3846875)(3.1621876,1.0846875)
		\psline[linewidth=0.024cm](3.7421875,1.8646874)(4.9821873,1.8646874)
		\psline[linewidth=0.024cm](5.5421877,1.8646874)(7.5421877,0.4246875)
		\psline[linewidth=0.024cm](7.5821877,-0.1553125)(5.5821877,-1.9353125)
		\psline[linewidth=0.024cm](4.9821873,-1.9553125)(3.7221875,-1.9753125)
		\psline[linewidth=0.024cm](3.1421876,-1.9353125)(1.1421875,-0.2153125)
		\psbezier[linewidth=0.022](3.7771952,-1.2053437)(4.157441,-1.2153381)(4.1621876,1.1847245)(3.7819414,1.1947188)
		\psbezier[linewidth=0.024](3.677195,-2.058559)(4.847579,-2.0753126)(4.8621874,1.9479338)(3.6918037,1.9646875)
		\usefont{T1}{ptm}{m}{n}
		\rput(6.7257814,0.0796875){\footnotesize $1$}
		\usefont{T1}{ptm}{m}{n}
		\rput(5.3257813,1.9396875){\footnotesize $1$}
		\pscircle[linewidth=0.022,dimen=outer](5.3021874,1.9646875){0.3}
		\pscircle[linewidth=0.022,dimen=outer](5.2921877,1.1346875){0.35}
		\pscircle[linewidth=0.022,dimen=outer](5.2921877,1.1346875){0.25}
		\psline[linewidth=0.004cm](4.9688954,1.2365253)(5.2154794,1.4528497)
		\psline[linewidth=0.004cm](4.953161,1.1701918)(5.331214,1.4791833)
		\psline[linewidth=0.004cm](4.9621873,1.1046875)(5.3821874,1.4646875)
		\psline[linewidth=0.004cm](4.9621873,1.0446875)(5.4221873,1.4246875)
		\psline[linewidth=0.004cm](4.9821873,1.0046875)(5.4821873,1.4046875)
		\psline[linewidth=0.004cm](5.0021877,0.9646875)(5.5221877,1.3646874)
		\psline[linewidth=0.004cm](5.0221877,0.9246875)(5.5621877,1.3246875)
		\psline[linewidth=0.004cm](5.0621877,0.9046875)(5.5821877,1.2846875)
		\psline[linewidth=0.004cm](5.1021876,0.8646875)(5.6021876,1.2246875)
		\psline[linewidth=0.004cm](5.1421876,0.8446875)(5.6221876,1.1846875)
		\psline[linewidth=0.004cm](5.2021875,0.8246875)(5.6221876,1.1246876)
		\psline[linewidth=0.004cm](5.2821875,0.8246875)(5.6021876,1.0446875)
		\psline[linewidth=0.004cm](5.3421874,0.8246875)(5.5621877,0.9646875)
		\usefont{T1}{ptm}{m}{n}
		\rput(5.2857814,1.1196876){\footnotesize $1$}
		\end{pspicture} 
	}
	
	\caption{A desired coloring of $F$ with respect to this cover}
\end{figure}

\begin{figure}[ht]\label{fig5}
	\centering
	\scalebox{1} 
	{
		\begin{pspicture}(0,-2.86875)(8.835,2.86875)
		\pscircle[linewidth=0.022,dimen=outer](3.4721875,-1.0453125){0.35}
		\pscircle[linewidth=0.022,dimen=outer](3.4721875,-1.0453125){0.25}
		\psline[linewidth=0.004cm](3.1488955,-0.9434747)(3.3954797,-0.72715026)
		\psline[linewidth=0.004cm](3.1331608,-1.0098083)(3.511214,-0.70081675)
		\psline[linewidth=0.004cm](3.1421876,-1.0753125)(3.5621874,-0.7153125)
		\psline[linewidth=0.004cm](3.1421876,-1.1353126)(3.6021874,-0.7553125)
		\psline[linewidth=0.004cm](3.1621876,-1.1753125)(3.6621876,-0.7753125)
		\psline[linewidth=0.004cm](3.1821876,-1.2153125)(3.7021875,-0.8153125)
		\psline[linewidth=0.004cm](3.2021875,-1.2553124)(3.7421875,-0.8553125)
		\psline[linewidth=0.004cm](3.2421875,-1.2753125)(3.7621875,-0.8953125)
		\psline[linewidth=0.004cm](3.2821875,-1.3153125)(3.7821875,-0.9553125)
		\psline[linewidth=0.004cm](3.3221874,-1.3353125)(3.8021874,-0.9953125)
		\psline[linewidth=0.004cm](3.3821876,-1.3553125)(3.8021874,-1.0553125)
		\psline[linewidth=0.004cm](3.4621875,-1.3553125)(3.7821875,-1.1353126)
		\psline[linewidth=0.004cm](3.5221875,-1.3553125)(3.7421875,-1.2153125)
		\usefont{T1}{ptm}{m}{n}
		\rput(3.5057812,-1.1003125){\footnotesize $2$}
		\usefont{T1}{ptm}{m}{n}
		\rput(3.5,1.1196876){\footnotesize $j$}
		\pscircle[linewidth=0.022,dimen=outer](3.4521875,1.0946875){0.35}
		\pscircle[linewidth=0.022,dimen=outer](3.4521875,1.0946875){0.25}
		\psline[linewidth=0.004cm](3.1288955,1.1965252)(3.3754795,1.4128498)
		\psline[linewidth=0.004cm](3.1131608,1.1301917)(3.491214,1.4391832)
		\psline[linewidth=0.004cm](3.1221876,1.0646875)(3.5421875,1.4246875)
		\psline[linewidth=0.004cm](3.1221876,1.0046875)(3.5821874,1.3846875)
		\psline[linewidth=0.004cm](3.1421876,0.9646875)(3.6421876,1.3646874)
		\psline[linewidth=0.004cm](3.1621876,0.9246875)(3.6821876,1.3246875)
		\psline[linewidth=0.004cm](3.1821876,0.8846875)(3.7221875,1.2846875)
		\psline[linewidth=0.004cm](3.2221875,0.8646875)(3.7421875,1.2446876)
		\psline[linewidth=0.004cm](3.2621875,0.8246875)(3.7621875,1.1846875)
		\psline[linewidth=0.004cm](3.3021874,0.8046875)(3.7821875,1.1446875)
		\psline[linewidth=0.004cm](3.3621874,0.7846875)(3.7821875,1.0846875)
		\psline[linewidth=0.004cm](3.4421875,0.7846875)(3.7621875,1.0046875)
		\psline[linewidth=0.004cm](3.5021875,0.7846875)(3.7221875,0.9246875)
		\usefont{T1}{ptm}{m}{n}
		\rput(5.2657814,-1.9603125){\footnotesize $1$}
		\rput{89.35016}(7.086884,-6.9383473){\psellipse[linewidth=0.022,linestyle=dashed,dash=0.16cm 0.16cm,dimen=outer](7.0521874,0.1146875)(0.53,0.89)}
		\rput{89.35016}(1.6685853,-1.5382434){\psellipse[linewidth=0.022,linestyle=dashed,dash=0.16cm 0.16cm,dimen=outer](1.6121875,0.0746875)(0.53,0.89)}
		\usefont{T1}{ptm}{m}{n}
		\rput(5.2857814,1.1196876){\footnotesize $1$}
		\pscircle[linewidth=0.022,dimen=outer](5.2921877,1.1346875){0.35}
		\pscircle[linewidth=0.022,dimen=outer](5.2921877,1.1346875){0.25}
		\usefont{T1}{ptm}{m}{n}
		\rput(3.4857812,-1.9003125){\footnotesize $1$}
		\pscircle[linewidth=0.022,dimen=outer](3.4421875,-1.8953125){0.3}
		\pscircle[linewidth=0.022,dimen=outer](5.2821875,-1.9553125){0.3}
		\psline[linewidth=0.004cm](5.0221877,-1.8553125)(5.3021874,-1.6753125)
		\psline[linewidth=0.004cm](4.9821873,-1.9353125)(5.3821874,-1.6753125)
		\psline[linewidth=0.004cm](5.0021877,-1.9753125)(5.4021873,-1.7153125)
		\psline[linewidth=0.004cm](5.0021877,-2.0153124)(5.4621873,-1.7353125)
		\psline[linewidth=0.004cm](5.0221877,-2.0553124)(5.5021877,-1.7553124)
		\psline[linewidth=0.004cm](5.0421877,-2.0953126)(5.5221877,-1.7953125)
		\psline[linewidth=0.004cm](5.0621877,-2.1353126)(5.5221877,-1.8353125)
		\psline[linewidth=0.004cm](5.1021876,-2.1553125)(5.5621877,-1.8753124)
		\psline[linewidth=0.004cm](5.1421876,-2.1753125)(5.5821877,-1.9153125)
		\psline[linewidth=0.004cm](5.1821876,-2.1953125)(5.5621877,-1.9753125)
		\psline[linewidth=0.004cm](5.2221875,-2.2153125)(5.5621877,-2.0153124)
		\psline[linewidth=0.004cm](5.2821875,-2.2153125)(5.5221877,-2.0753126)
		\psellipse[linewidth=0.022,linestyle=dashed,dash=0.16cm 0.16cm,dimen=outer](3.4521875,-1.4653125)(0.53,0.89)
		\psellipse[linewidth=0.022,linestyle=dashed,dash=0.16cm 0.16cm,dimen=outer](5.2921877,-1.4653125)(0.53,0.89)
		\psellipse[linewidth=0.022,linestyle=dashed,dash=0.16cm 0.16cm,dimen=outer](3.4521875,1.5146875)(0.53,0.89)
		\psellipse[linewidth=0.022,linestyle=dashed,dash=0.16cm 0.16cm,dimen=outer](5.2921877,1.5346875)(0.53,0.89)
		\usefont{T1}{ptm}{m}{n}
		\rput(3.4596875,2.6846876){\small $x_1$}
		\usefont{T1}{ptm}{m}{n}
		\rput(5.2596874,2.6846876){\small $x_2$}
		\usefont{T1}{ptm}{m}{n}
		\rput(8.359688,0.1246875){\small $x_3$}
		\usefont{T1}{ptm}{m}{n}
		\rput(5.3196874,-2.6553125){\small $x_4$}
		\usefont{T1}{ptm}{m}{n}
		\rput(3.5596876,-2.6553125){\small $x_5$}
		\usefont{T1}{ptm}{m}{n}
		\rput(0.3996875,0.1646875){\small $x_6$}
		\psline[linewidth=0.024cm](2.1051137,0.40652534)(3.1821876,1.7446876)
		\psline[linewidth=0.024cm](3.7821875,1.0646875)(4.9621873,1.0646875)
		\psline[linewidth=0.024cm](5.6221876,1.0446875)(6.5221877,0.4046875)
		\psline[linewidth=0.024cm](6.5221877,-0.1753125)(5.6021876,-0.9753125)
		\psline[linewidth=0.022cm](4.9621873,-1.0353125)(3.7821875,-1.0353125)
		\psline[linewidth=0.08cm](3.1221876,-0.9553125)(2.1221876,-0.2153125)
		\psline[linewidth=0.024cm](1.1421875,0.3646875)(3.1021874,1.1046875)
		\psline[linewidth=0.024cm](3.7421875,1.8646874)(4.9821873,1.8646874)
		\psline[linewidth=0.024cm](5.5421877,1.8646874)(7.5421877,0.4246875)
		\psline[linewidth=0.024cm](7.5821877,-0.1553125)(5.5821877,-1.9353125)
		\psline[linewidth=0.024cm](4.9821873,-1.9553125)(3.7221875,-1.9753125)
		\psline[linewidth=0.024cm](3.1421876,-1.9353125)(1.1421875,-0.2153125)
		\psbezier[linewidth=0.022](3.6221876,-2.0953126)(4.7821875,-2.1553125)(4.4221873,1.170756)(3.7933187,1.1846875)
		\psbezier[linewidth=0.024](3.6821876,-1.2953125)(4.887579,-1.2953125)(4.8621874,1.9511684)(3.6918037,1.9646875)
		\usefont{T1}{ptm}{m}{n}
		\rput(7.525781,0.0996875){\footnotesize $1$}
		\pscircle[linewidth=0.022,dimen=outer](7.5021877,0.1246875){0.3}
		\pscircle[linewidth=0.022,dimen=outer](6.6921873,0.0946875){0.35}
		\pscircle[linewidth=0.022,dimen=outer](6.6921873,0.0946875){0.25}
		\psline[linewidth=0.004cm](6.3688955,0.19652528)(6.6154795,0.41284972)
		\psline[linewidth=0.004cm](6.353161,0.13019174)(6.731214,0.43918326)
		\psline[linewidth=0.004cm](6.3621874,0.0646875)(6.7821875,0.4246875)
		\psline[linewidth=0.004cm](6.3621874,0.0046875)(6.8221874,0.3846875)
		\psline[linewidth=0.004cm](6.3821874,-0.0353125)(6.8821874,0.3646875)
		\psline[linewidth=0.004cm](6.4021873,-0.0753125)(6.9221873,0.3246875)
		\psline[linewidth=0.004cm](6.4221873,-0.1153125)(6.9621873,0.2846875)
		\psline[linewidth=0.004cm](6.4621873,-0.1353125)(6.9821873,0.2446875)
		\psline[linewidth=0.004cm](6.5021877,-0.1753125)(7.0021877,0.1846875)
		\psline[linewidth=0.004cm](6.5421877,-0.1953125)(7.0221877,0.1446875)
		\psline[linewidth=0.004cm](6.6021876,-0.2153125)(7.0221877,0.0846875)
		\psline[linewidth=0.004cm](6.6821876,-0.2153125)(7.0021877,0.0046875)
		\psline[linewidth=0.004cm](6.7421875,-0.2153125)(6.9621873,-0.0753125)
		\usefont{T1}{ptm}{m}{n}
		\rput(6.6857815,0.0796875){\footnotesize $1$}
		\pscircle[linewidth=0.022,dimen=outer](5.2621875,1.8846875){0.3}
		\usefont{T1}{ptm}{m}{n}
		\rput(5.2857814,1.8996875){\footnotesize $1$}
		\psline[linewidth=0.004cm](5.0021877,1.9846874)(5.2821875,2.1646874)
		\psline[linewidth=0.004cm](4.9621873,1.9046875)(5.3621874,2.1646874)
		\psline[linewidth=0.004cm](4.9821873,1.8646874)(5.3821874,2.1246874)
		\psline[linewidth=0.004cm](4.9821873,1.8246875)(5.4421873,2.1046875)
		\psline[linewidth=0.004cm](5.0021877,1.7846875)(5.4821873,2.0846875)
		\psline[linewidth=0.004cm](5.0221877,1.7446876)(5.5021877,2.0446875)
		\psline[linewidth=0.004cm](5.0421877,1.7046875)(5.5021877,2.0046875)
		\psline[linewidth=0.004cm](5.0821877,1.6846875)(5.5421877,1.9646875)
		\psline[linewidth=0.004cm](5.1221876,1.6646875)(5.5621877,1.9246875)
		\psline[linewidth=0.004cm](5.1621876,1.6446875)(5.5421877,1.8646874)
		\psline[linewidth=0.004cm](5.2021875,1.6246876)(5.5421877,1.8246875)
		\psline[linewidth=0.004cm](5.2621875,1.6246876)(5.5021877,1.7646875)
		\usefont{T1}{ptm}{m}{n}
		\rput(5.3057814,-1.0603125){\footnotesize $1$}
		\pscircle[linewidth=0.022,dimen=outer](5.2721877,-1.0653125){0.35}
		\pscircle[linewidth=0.022,dimen=outer](5.2721877,-1.0653125){0.25}
		\usefont{T1}{ptm}{m}{n}
		\rput(3.4357812,1.8796875){\footnotesize $i$}
		\pscircle[linewidth=0.022,dimen=outer](3.4421875,1.8846875){0.3}
		\pscircle[linewidth=0.022,dimen=outer](2.0121875,0.0946875){0.35}
		\pscircle[linewidth=0.022,dimen=outer](2.0121875,0.0946875){0.25}
		\psline[linewidth=0.004cm](1.6888955,0.19652528)(1.9354795,0.41284972)
		\psline[linewidth=0.004cm](1.6731609,0.13019174)(2.0512142,0.43918326)
		\psline[linewidth=0.004cm](1.6821876,0.0646875)(2.1021874,0.4246875)
		\psline[linewidth=0.004cm](1.6821876,0.0046875)(2.1421876,0.3846875)
		\psline[linewidth=0.004cm](1.7021875,-0.0353125)(2.2021875,0.3646875)
		\psline[linewidth=0.004cm](1.7221875,-0.0753125)(2.2421875,0.3246875)
		\psline[linewidth=0.004cm](1.7421875,-0.1153125)(2.2821875,0.2846875)
		\psline[linewidth=0.004cm](1.7821875,-0.1353125)(2.3021874,0.2446875)
		\psline[linewidth=0.004cm](1.8221875,-0.1753125)(2.3221874,0.1846875)
		\psline[linewidth=0.004cm](1.8621875,-0.1953125)(2.3421874,0.1446875)
		\psline[linewidth=0.004cm](1.9221874,-0.2153125)(2.3421874,0.0846875)
		\psline[linewidth=0.004cm](2.0021875,-0.2153125)(2.3221874,0.0046875)
		\psline[linewidth=0.004cm](2.0621874,-0.2153125)(2.2821875,-0.0753125)
		\usefont{T1}{ptm}{m}{n}
		\rput(2.0057812,0.0796875){\footnotesize $1$}
		\usefont{T1}{ptm}{m}{n}
		\rput(1.2057812,0.0396875){\footnotesize $1$}
		\pscircle[linewidth=0.022,dimen=outer](1.1821876,0.0646875){0.3}
		\end{pspicture} 
	}
	
	\caption{A desired coloring of $F$ with respect to this cover}
\end{figure}

Now we are ready to prove a general result.

\begin{theorem}\label{main2} Every planar graph without $4$-cycles adjacent to $3$-cycles is 
DP-$(f_1, \ldots, f_s)$-colorable 
if $s \geq 2,$ $f_1(v)+\cdots+f_s(v)\geq 4$ for each vertex $v,$ 
and $f_i(v)\in \{0,1,2\}$ for each $v$ and $i.$ 
\end{theorem}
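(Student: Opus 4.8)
The plan is to mimic the minimal-counterexample argument of Theorem~\ref{main1}, replacing its combinatorial core by one that tolerates variable degeneracy and more than two colours. Let $(G,H)$ together with $f_1,\dots,f_s$ form a minimal counterexample. As in Theorem~\ref{main1}, $G$ contains the configuration $F$ of Lemma~\ref{C35} -- a $6$-cycle $x_1\dots x_6$ with chord $x_1x_5$ and $d_G(x_i)=4$ for all $i$ -- and $F$ is induced because $G$ has no $4$-cycle adjacent to a $3$-cycle. The graph $G-V(F)$ is a smaller planar graph with no $4$-cycle adjacent to a $3$-cycle, and the hypotheses $s\geq2$, $f_i(v)\in\{0,1,2\}$, $\sum_i f_i(v)\geq4$ are inherited, so by minimality $G-V(F)$ has a DP-$(f_1,\dots,f_s)$-coloring $R'$ with the restricted cover. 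For $x_k\in V(F)$ and each colour $i$ put
\[
f^*_i(x_k)=f_i(x_k)-\bigl|\{(u,c)\in R'\ :\ (x_k,i)(u,c)\in E(H)\}\bigr|.
\]
Each of the $4-d_F(x_k)$ neighbours of $x_k$ lying outside $F$ contributes at most one such edge, and to at most one colour $i$, so $f^*_i(x_k)\leq2$ and $\sum_i f^*_i(x_k)\geq\sum_i f_i(x_k)-(4-d_F(x_k))\geq d_F(x_k)$, where $d_F(x_1)=d_F(x_5)=3$ and $d_F(x_k)=2$ otherwise.

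It now suffices to prove the following extension lemma: for the cover $H^*$ that $H$ induces on $F$, and for any $f^*_i\in\{0,1,2\}$ with $\sum_i f^*_i(x_k)\geq d_F(x_k)$ for every $k$, the graph $F$ has a representative set $R^*$, using only colours with $f^*_i(x_k)\geq1$, such that $F_{R^*}$ is strictly $f^*_{R^*}$-degenerate. Granting this, let $R=R^*\cup R'$ and delete the vertices of $G$ by first processing $V(F)$ along the degeneracy order of $F_{R^*}$ and then processing $V(G-V(F))$ along the degeneracy order of $(G-V(F))_{R'}$: when $x_k$ is deleted, its still-present neighbours are its later neighbours in $F_{R^*}$ -- fewer than $f^*_{R^*}(x_k)$ of them -- together with exactly its $f_{R^*}(x_k)-f^*_{R^*}(x_k)$ conflicts with $R'$, hence fewer than $f_{R^*}(x_k)=f_R(x_k)$ in total; and when a vertex of $G-V(F)$ is deleted all of $V(F)$ is already gone. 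So $G_R$ is strictly $f_R$-degenerate, contradicting the choice of $G$.

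To prove the extension lemma, first observe that a representative set which is strictly $g$-degenerate for some $g\leq f^*$ is a fortiori strictly $f^*$-degenerate and still uses only available colours, so we may replace $f^*$ by any $g\leq f^*$ with $\sum_i g_i(x_k)=d_F(x_k)$; thus we may assume $\sum_i f^*_i(x_k)=d_F(x_k)$, and then the multiset of positive values of $f^*$ at $x_k$ (its \emph{menu}) is $\{2\}$ or $\{1,1\}$ when $d_F(x_k)=2$, and $\{2,1\}$ or $\{1,1,1\}$ when $x_k\in\{x_1,x_5\}$. Now split as in Theorem~\ref{main1}. In CASE~2 -- there is a cycle edge $x_kx_{k+1}$ and a matched pair $(x_k,i)(x_{k+1},j)$ of $H^*$ with $f^*_i(x_k)=0$ and $f^*_j(x_{k+1})\geq1$ -- put $(x_{k+1},j)$ in $R^*$ and colour $x_{k+2},\dots,x_{k-1},x_k$ greedily around the cycle, each time choosing a colour whose number of conflicts with the already coloured $F$-neighbours is below its $f^*$-value; reversing the colouring order gives the degeneracy order. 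This succeeds exactly as in Theorem~\ref{main1}: every interior vertex has at most two earlier $F$-neighbours and, since each neighbour is matched to at most one colour of that vertex (covers give matchings), a short check on the menus $\{2\},\{1,1\},\{2,1\},\{1,1,1\}$ always leaves an admissible colour; and at the last vertex $x_k$ the edge we forced, $(x_{k+1},j)$, is matched only to the unavailable colour $i$ of $x_k$, so the conflicts at $x_k$ number at most $d_F(x_k)-1<d_F(x_k)=\sum_i f^*_i(x_k)$ and an admissible colour survives. In CASE~1 -- the negation -- the condition says no available colour of $x_{k+1}$ is matched along $x_kx_{k+1}$ to an unavailable colour of $x_k$; since, moreover, every $H^*$-edge incident with an unavailable colour is inert in any such $R^*$, we may delete all unavailable colours, reducing to covers $\widehat H$ of $F$ in which every vertex carries exactly its menu (size at most $3$). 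Only finitely many such $\widehat H$ exist up to the order-two automorphism of $F$ swapping $x_1\leftrightarrow x_5$ and up to relabelling colours inside menus, and for each one an admissible $R^*$ is exhibited directly, just as in the proof of Theorem~\ref{main1} for the single instance $f^*\equiv2$.

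CASE~1 is where I expect the real work to lie. Compared with Theorem~\ref{main1} there are genuinely new configurations: since $s$ may exceed $2$, $x_1$ and $x_5$ can carry the all-singleton menu $\{1,1,1\}$ and the degree-$2$ vertices can carry the menu $\{2\}$. In particular, when every menu consists of singletons the extension lemma asks for an $(H^*,L)$-coloring of $F$ with $|L(x_k)|=d_F(x_k)$, which is available because $F$ is $2$-connected and is neither a complete graph nor a cycle -- so one may either invoke a DP-analogue of the degree-choosability theorem or simply verify the six-vertex graph $F$ by hand -- while menus containing a $2$ supply the slack needed for the mixed configurations. The bookkeeping burden is to confirm that "not CASE~2" really does cut the family of residual covers down to a manageable list; the automorphism of $F$ and the reduction to $\sum_i f^*_i(x_k)=d_F(x_k)$ are the levers that keep that list finite and short.
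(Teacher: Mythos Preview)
Your architecture matches the paper's: minimal counterexample, the configuration $F$ from Lemma~\ref{C35}, the residual functions $f^*_i$, and the CASE~1/CASE~2 split inherited from Theorem~\ref{main1}. What you miss is that the paper's proof of Theorem~\ref{main2} is not a fresh case analysis but a one-step \emph{reduction to} Theorem~\ref{main1}, and there is a single observation that effects that reduction.

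The observation is: if two cycle-neighbours $x_k,x_{k+1}$ have different numbers of positive coordinates in $f^*$, then a CASE~2-style greedy already applies. Say $x_{k+1}$ has $b$ available colours and $x_k$ has $a<b$; the matching on the edge $x_kx_{k+1}$ is injective, so at most $a$ of the $b$ available colours of $x_{k+1}$ can be matched to available colours of $x_k$, whence some available colour $j$ of $x_{k+1}$ is matched to an unavailable colour of $x_k$ (or to nothing). Colouring $x_{k+1}$ with $j$ costs $x_k$ nothing, and the greedy around the cycle finishes exactly as in CASE~2 of Theorem~\ref{main1} (if $a>b$, traverse the cycle in the opposite direction). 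Consequently, in the complementary case every $x_k$ has the \emph{same} number of positive coordinates; since that number lies in $\{1,2\}$ for $k\in\{2,3,4,6\}$ and in $\{2,3\}$ for $k\in\{1,5\}$, it is forced to be $2$ throughout. Discarding the zero coordinates then leaves a genuine two-colour cover of $F$ with menus $\{1,1\}$ at the degree-$2$ vertices and $\{2,1\}$ at $x_1,x_5$ --- literally CASE~1 of Theorem~\ref{main1}, already settled by the four figures there.

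Without this pigeonhole step your CASE~1 still admits the extra menus $\{2\}$ and $\{1,1,1\}$, and you defer the resulting enumeration (``for each one an admissible $R^*$ is exhibited directly''), which under your organisation is most of the proof but is never carried out. Your side remarks are sound --- e.g.\ the all-singleton subcase is DP-degree-colouring of $F$, and $F$, being $2$-connected and neither complete nor a cycle, is indeed DP-degree-colourable --- but none of that extra work is needed once you notice that unequal menu sizes already fall to the greedy argument.
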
 

\begin{proof}
Suppose that $G$ with a cover $H$ is a minimal counterexample. 
Since $G$ does not have $4$-cycles adjacent to $3$-cycles, 
$G$ has a configuration $F$ as in Lemma \ref{C35}. 
By minimality, there is a DP-$(f_1, \ldots, f_s)$-coloring $R'$ on $G - \{x_1,\ldots,x_6\}.$ 

For each $x_k \in V(F)$ and $k \in \{1,\ldots,s\},$ we put $f^*_i(x_k)$ equals $f_i(x_k)$ minus 
the number of $(v,j) \in R'$ such that $(v,j)$ and $(x,i)$ are adjacent in $H.$ 

similar to the proof of Theorem \ref{main1}, 
if we have  a DP-$(f^*_1,\ldots,f^*_s)$-coloring of $F,$ 
then one can obtain a desired DP-$(f_1,\ldots,f_s)$-coloring on $G.$ 

Note that each $x_i$ may have different size of its list of colors. 
To make all $x_k$s have comparable $(f_1^*(x_k),\ldots,f_s^*(x_k)),$ 
we fill out illegal color $i$ for $x_k$ by using $f_i^*(x_k)=0.$ 
By the definition and conditions of $f_i^*,$ 
initially $(f_1^*(x_k),\ldots,f_s^*(x_k))$ has one or two positive coordinates when $k\in \{2,3,4,6\}$ 
and $(f_1^*(x_k),\ldots,f_s^*(x_k))$ has two or three positive coordinates when $k\in \{1,5\}.$ 
If $(f_1^*(x_k),\ldots,f_s^*(x_k))$ and $(f_1^*(x_{k+1}),\ldots,f_s^*(x_{k+1}))$ 
have different numbers of positive coordinates, 
then we can complete the coloring by a method similar to CASE 2 in a proof of Theorem \ref{main1}. 

Thus we assume that each $(f_1^*(x_k),\ldots,f_s^*(x_k))$ has exactly two positive coordinates. 
Since  color $i$ 
in which $f_i^*(x_k)=0$ can be discarded from consideration, 
we arrive that  each $(f_1^*(x_k),\ldots,f_s^*(x_k))$ can be reduced to $(f_{i_1}^*(x_k),f_{i_2}^*(x_k)).$ 
Thus the proof can be completed by a method similar to CASE 1 in the proof of Theorem \ref{main1}. 
\end{proof}

\end{document}